\documentclass[final,twocolumn]{elsarticle}

\usepackage[utf8]{inputenc}
\usepackage[english]{babel}

\bibliographystyle{elsarticle-num}

\usepackage{amsmath}
\usepackage{bm} 

\usepackage{amssymb}
\usepackage{bbm}


\let\theoremstyle\relax
\usepackage{amsthm}

\usepackage{mathtools}

\usepackage{todonotes}

\theoremstyle{definition}
\newtheorem{definition}{Definition}
\newtheorem{theorem}{Theorem}
\newtheorem{lemma}{Lemma}
\newtheorem{assumption}{Assumption}
\newtheorem{problem}{Problem}
\newtheorem{remark}{Remark}

\usepackage{hyperref}
\hypersetup{
    colorlinks = true,
    citecolor = blue
}


\usepackage{graphicx}
\graphicspath{ {resources/} }
\usepackage{booktabs}
\usepackage{float}

\pagestyle{empty}

\journal{System \& Control Letters}

\begin{document}

\begin{frontmatter}

\title{An Energy-Optimal Framework for Assignment and Trajectory Generation in Teams of Autonomous Agents}
\author[Delaware]{Logan Beaver}\ead{lebeaver@udel.edu}
\author[Delaware]{Andreas Malikopoulos}\ead{andreas@udel.edu}

\address[Delaware]{Department of Mechanical Engineering, University of Delaware, Newark, Delaware, 19716, USA}

\begin{keyword}
Multi-Agent systems \sep Decentralized control \sep Optimal control \sep Assignment \sep Trajectory Generation
\end{keyword}

\begin{abstract}
    In this paper, we present an approach for solving the problem of moving $N$ homogeneous agents into $M \geq N$ goal locations along energy-minimizing trajectories. We propose a decentralized framework that only requires knowledge of the goal locations and partial observations of the global state by each agent. The framework includes guarantees on safety through dynamic constraints, and a method to impose a dynamic, global priority ordering on the agents. A solution to the goal assignment and trajectory generation problems are derived in the form of a binary program and a nonlinear system of equations. Then, we present the conditions for optimality and characterize the conditions under which our algorithm is guaranteed to converge to a unique assignment of agents to goals. We also solve the fully constrained decentralized trajectory generation problem considering the state, control, and safety constraints. Finally, we validate the efficacy of our approach through a numerical simulation in MATLAB. 
\end{abstract}

\end{frontmatter}

\section{INTRODUCTION}
\subsection{Motivation}
Complex systems consist of diverse entities that interact both in space and time \cite{Malikopoulos2016c}. Referring to something as complex implies that it consists of interdependent entities or agents that can adapt, i.e., they can respond to their local and global environment. Complex systems appear in many applications, including cooperation between components of autonomous systems, sensor fusion, and natural biological systems. As we move to increasingly complex systems \cite{Malikopoulos2015},
new control approaches are needed to optimize the impact of individuals on system-level
behavior through the control of individual entities \cite{Malikopoulos2015b,Malikopoulos}.

Swarms are typical complex system which have attracted considerable attention in many applications, e.g., transportation, exploration, construction, surveillance, and manufacturing. As discussed by Oh et al. \cite{Oh2017}, swarms are especially attractive due to their natural parallelization and general adaptability. One of the typical multiagent applications is creating desired formations. However, due to cost constraints on any real swarm of autonomous agents, e.g., limited computation capabilities, battery capacity, and sensing capabilities, any efficient control approach needs to take into account the energy consumption of each agent. Moving agents into a desired formation has been explored previously; however, creating this formation while minimizing energy consumption is an open problem.

\subsection{Related Work}

Brambilla et al. \cite{Brambilla2013} classified approaches to swarms into two distinct groups, macroscopic, and microscopic.
Macroscopic approaches generate group behavior from a system of partial differential equations which are spatially discretized and applied to individual agents; this approach is fundamentally based on work by Turing \cite{Turing1952}, and is used extensively in bio-inspired formation and pattern forming \cite{Oh2015}.
Our approach is microscopic; that is, we control the behavior of individual agents to achieve some desired global outcome.
Microscopic approaches are based on the seminal work by Reynolds \cite{Reynolds1987}, which applied an agent-based method to capture the flocking behavior of birds.

There is a rich literature on the creation of a desired formation, such as the construction of rigid formations from triangular sub-structures \cite{Guo2010,Hanada2007}, formation algorithms inspired by crystal growth \cite{Song}, and growing swarm formations in a lattice structure \cite{Lee2008}. It is also possible to build formations using only scalar, bearing, or distance measurements to move agents into a desired formation \cite{Swartling2014,Lin2004}. Olfati-Saber et al. \cite{Olfati-Saber2007} proved that many formation problems may be solved by applying a modified form of the basic consensus algorithm. However, none of these approaches consider the energy cost to individual agents in the swarm.

A significant amount of work has applied optimization methods to designing potential fields for agent interaction \cite{Wang2013,Sun,Xu2015,Rajasree,Vasarhelyi2018OptimizedEnvironmentsb}. However, these approaches optimize the shape of the potential field and do not consider the energy consumption of individual agents. Previous work by Turpin et al. \cite{Turpin} has generated optimal assignments using a centralized planner. Other approaches require global information about the system  (e.g., \cite{Turpin} requires globally unique assignments a priori, \cite{Morgan2016} requires the graph diameter of the system, and \cite{Rubenstein2012} imposes global ``seed'' agents on the swarm). 

Our approach is decentralized, and thus, each agent may only partially observe the entire system state. 
The latter results in a non-classical information structure and many techniques for solving centralized systems do not hold \cite{Dave2018StructuralCommunication}. To address this problem, one may impose a priority ordering on the agents. This has been achieved in previous work through a centralized controller \cite{Turpin2013ConcurrentRobots,Chalaki2019AnIntersectionsb}. In general, finding an optimal ordering is NP-Hard, and an optimal ordering is not always guaranteed to exist \cite{Ma2019SearchingFinding}. To reduce the complexity of ordering agents, much work has been done to decentralize the ordering problem, including applying discretized path-based heuristics \cite{Wu2019Multi-RobotProspects}, and reinforcement learning \cite{Sartoretti2018PRIMAL:Learning}. In contrast, our approach introduces a decentralized method of dynamically ordering agents that is path independent and relies only on information directly observable by each agent.

Our approach only requires agents to make partial observations of the entire system. This may lead to performance degradation relative to a centralized controller with global knowledge. However, this is a fundamental feature of decentralized control problems in general \cite{Witsenhausen1968AControl}. Other efforts have attempted to circumvent this issue with information sharing, either directly \cite{Dave2018StructuralCommunication} or through decentralized auctioning \cite{Morgan2016}. However, these approaches tend to require knowledge of the global communication graph, long delays before decisions are made, or both. In contrast, we embrace partial observability of the system and exploit it to reduce the computational load on each individual agent.

The main contributions of this paper are: (1) a decentralized set of \emph{interaction dynamics}, which impose a priority order on agents in a  decentralized manner, (2) an assignment algorithm that exploits the unconstrained optimal trajectory of the agents,  (3) guarantees on the stability of the proposed control policy, and (4) optimality conditions for the fully constrained collision avoidance problem in $\mathbb{R}^2$, as well as a locally optimal solution in the form of nonlinear algebraic equations.

\subsection{Organization of This Paper}
The remainder of this paper proceeds as follows. In Section \ref{sec:problem}, we formulate the decentralized optimal control problem, and we decompose it into the coupled assignment and trajectory generation subproblems.
In Section \ref{sec:assignment}, we present the conditions which guarantee system convergence along with the assignment problem.
Then, in Section \ref{sec:trajectory}, we prove that these conditions are satisfied by our framework and solve the optimal trajectory generation problem.
Finally, in Section \ref{sec:simulation}, we present a series of MATLAB simulations to show the performance of the algorithm, and we presented concluding remarks in Section \ref{sec:conclusion}.

\section{PROBLEM FORMULATION} \label{sec:problem}

We consider a swarm of $N\in\mathbb{N}$ autonomous agents indexed by the set $\mathcal{A} = \{1, \dots, N\}$. Our objective is to design a decentralized control framework to move the $N$ agents into $M\in\mathbb{N}$ goal positions, indexed by the set $\mathcal{F} = \{1, \dots M\}$. We consider the case where $N \leq M$, i.e., no redundant agents are brought to fill the formation. This requirement can be relaxed by defining a behavior for excess agents, such as idling \cite{Turpin2014}. Each agent $i\in\mathcal{A}$ follows double-integrator dynamics,
\begin{align}
\mathbf{\dot{p}}_i(t) &= \mathbf{v}_i(t) \label{eqn:pDynamics}, \\
\mathbf{\dot{v}}_i(t) &= \mathbf{u}_i(t) \label{eqn:vDynamics},
\end{align}
where $\mathbf{p}_i(t)\in\mathbb{R}^2$ and $\mathbf{v}_i(t)\in\mathbb{R}^2$ are the time-varying position and velocity vectors respectively, and $\mathbf{u}_i(t) \in \mathbb{R}^2$ is the control input (acceleration/deceleration) over time $t\in[t_i^0, t_i^f]$, where $t_i^0$ is the initial time for agent $i$ and $t_i^f\in \mathbb{R}_{>0}$ is the terminal time for agent $i$. Additionally, each agent's control input and velocity are bounded by
\begin{align}
	v_{\min} \leq ||\mathbf{v}_i(t)|| \leq v_{\max},
	\label{eqn:vBounds}\\
	u_{\min} \leq ||\mathbf{u}_i(t)|| \leq u_{\max},
	\label{eqn:uBounds}
\end{align}
where $||\cdot||$ is the Euclidean norm. Thus, the state of each agent $i\in\mathcal{A}$ is given by the time-varying vector
\begin{equation}
\mathbf{x}_i(t) =
\begin{bmatrix}
\mathbf{p}_i(t) \\ \mathbf{v}_i(t)
\end{bmatrix},
\end{equation}
and we denote the global (system) state as
\begin{equation}  \label{eq:globalState}
    \mathbf{x}(t) =
    \begin{bmatrix} 
    \mathbf{x}_1(t) \\
    \dots \\
    \mathbf{x}_N(t)
    \end{bmatrix}.
\end{equation}

The energy consumption of any agent $i\in\mathcal{A}$ is given by
\begin{equation} \label{eqn:energy}
\dot{E}_i(t) = \frac{1}{2}||\mathbf{u}_i(t)||^2.
\end{equation}
We select the $L^2$ norm of the control input as our energy model since, in general, acceleration/deceleration requires more energy than applying no control input. Therefore, we expect that minimizing the acceleration/deceleration of each agent will yield a proportional reduction in energy consumption.

Our objective is to develop a decentralized framework for the $N$ agents to optimally, in terms of energy, create any desired formation of $M$ points while avoiding collisions between agents.
\begin{definition} \label{def:goals}
	The \textit{desired formation} is the set of time-varying vectors $\mathcal{G} = \{\mathbf{p}_j^*(t) \in \mathbb{R}^2 ~ | ~ j \in \mathcal{F}\}$. The set $\mathcal{G}$ can be prescribed offline, i.e., by a human  designer, or online by a high-level planner.
\end{definition}

In this framework, the agents are cooperative and capable of communication within a neighborhood, which we define next.

\begin{definition} \label{def:neighborhood}
	The \textit{neighborhood} of agent $i\in\mathcal{A}$ is the time-varying set 
	\begin{equation*}
	\mathcal{N}_i(t) = \Big\{j\in\mathcal{A} ~ \Big| ~ \big|\big|\textbf{p}_i(t) - \textbf{p}_j(t) \big|\big| \leq h\Big\},
	\end{equation*}
	where $h\in\mathbb{R}$ is the sensing and communication horizon of each agent.
\end{definition}

Agent $i\in\mathcal{A}$ is also able to measure the relative position of any neighboring agent $j\in\mathcal{N}_i(t)$, i.e., agent $i$ makes partial observations of the global state. We denote the relative position between two agents $i$ and $j$ by the vector
\begin{align} \label{eq:s}
    \mathbf{s}_{ij}(t) = \mathbf{p}_j(t) - \mathbf{p}_i(t).
\end{align}
Each agent $i\in\mathcal{A}$ occupies a closed disk of radius $R$; hence, to guarantee safety for agent $i$ we impose the following constraints for all agents $i\in\mathcal{A},~~j\in\mathcal{N}_i(t), ~ j\neq i$,
\begin{align} \label{eqn:collisionCondition}
||\mathbf{s}_{ij}(t)|| &\geq  2R, \quad \forall t\in[t_i^0, t_i^f], \\
h &>>  2R. \label{eqn:sensingCondition}
\end{align}
Condition \eqref{eqn:collisionCondition} is our safety constraint, which ensures that no two agents collide. We also impose the strict form of \eqref{eqn:collisionCondition} pairwise to all goals in the desired formation, $\mathcal{G}$. Equation \eqref{eqn:sensingCondition} is a system-level constraint which ensures agents are able to detect each other prior to a collision.

In our modeling framework we impose the following assumptions:
\begin{assumption} \label{smp:perfect}
	The state $\mathbf{x}_i(t)$ for each agent $i\in\mathcal{A}$ is perfectly observed and there is negligible communication delay between the agents. 
\end{assumption}

Assumption \ref{smp:perfect} is required to evaluate the idealized performance of the generated optimal solution. In general, this assumption may be relaxed by formulating a stochastic optimal control problem to generate agent trajectories.

\begin{assumption} \label{smp:energy}
	The energy cost of communication is negligible; the only energy consumption is in the form of (\ref{eqn:energy}). 
\end{assumption}

The strength of this assumption is application dependent. For cases with long-distance communications or high data rates, the trade-off between communication and motion costs can be controlled by varying the sensing and communicating radius, $h$, of the agents.

To solve the desired formation problem, we first relax the inter-agent collision avoidance constraint to decouple the agent trajectories. This decoupling reduces the problem from a single mixed-integer program to a coupled pair of binary and quadratic programs, which we solve sequentially. This decoupling is common in the literature \cite{Turpin,Morgan2016}, and usually does not affect the outcome of the assignment problem.

Next, we present some preliminary results before decomposing the desired formation problem into the two subproblems, minimum-energy goal assignment and trajectory generation.

\subsection{Preliminaries}

First we consider that any agent $i\in\mathcal{A}$ obeys double-integrator dynamics, \eqref{eqn:pDynamics} - \eqref{eqn:vDynamics}, and has an energy model with the form of \eqref{eqn:energy}.Then, we let the state and control trajectories of agent $i$ be unconstrained, i.e., relax \eqref{eqn:vBounds}, \eqref{eqn:uBounds}, and \eqref{eqn:collisionCondition}. In this case, if $i$ is traveling between two fixed states, the unconstrained minimum-energy trajectory is given by the following system of linear equations:
\begin{align}
    \mathbf{u}_i(t) &= \mathbf{a}_i~t + \mathbf{b}_i, \label{eq:uUnc}\\
    \mathbf{v}_i(t) &= \frac{\mathbf{a}_i}{2}~t^2 + \mathbf{b}_i~t + \mathbf{c}_i, \label{eq:vUnc}\\
    \mathbf{p}_i(t) &= \frac{\mathbf{a}_i}{6}~t^3 + \frac{\mathbf{b}_i}{2}~t^2 + \mathbf{c}_i~t + \mathbf{d}_i, \label{eq:pUnc}
\end{align}
where $\mathbf{a}_i, \mathbf{b}_i, \mathbf{c}_i, $ and $\mathbf{d}_i$ are constant vectors of integration.
The derivation of \eqref{eq:uUnc} - \eqref{eq:pUnc} is straightforward and can be found in \cite{Malikopoulos2018}.

Thus, the energy consumed for any unconstrained trajectory of agent $i\in\mathcal{A}$ at time $t$ traveling towards the goal $j\in\mathcal{F}$ is given by
\begin{align}
    E_i^j(t) &= \int_{t}^{t_i^f} ||\mathbf{u}_i(\tau)||^2 ~ d\tau = (t_f^3-t^3)\Big(\frac{a_{i,x}^2 + a_{i,y}^2}{3}\Big)\notag \\ &~+ (t_f^2-t^2)\Big(a_{i,x}~b_{i,x}+a_{i,y}~b_{i,y}\Big) \notag\\
    &~+ (t_f - t)\Big(\frac{b_{i,x}^2 + b_{i,y}^2}{2}\Big), \label{eq:uncEnergy}
\end{align}
where $t\in[t_i^0, t_i^f]$, and $\mathbf{a}_i = [a_{i,x}, a_{i,y}]^T$, $\mathbf{b}_i = [b_{i,x}, b_{i,y}]^T$
are the coefficients of \eqref{eq:uUnc}.

Next, we present the interaction dynamics between agents. To resolve any conflict between agents, we consider the following objectively measurable constants: 1) neighborhood size, 2) energy required to reach a goal, and 3) agent index, which may be arbitrarily assigned.
Each of these quantities has an associated indicator function for comparing two agents $i,j\in\mathcal{A}, ~ j\neq i$,
\begin{align}
    \mathbbm{1}^\mathcal{N}_{ij}(t) &\coloneqq \label{eq:ind1}
    \begin{cases}
    1 & |\mathcal{N}_i(t)| > |\mathcal{N}_j(t)|,\\
    0 & |\mathcal{N}_i(t)| \leq |\mathcal{N}_j(t)|,
    \end{cases} \\
    \mathbbm{1}^\mathcal{E}_{ij}(t) &\coloneqq \label{eq:ind2}
    \begin{cases}
    1 & E_i(t) > E_j(t),\\
    0 & E_i(t) \leq E_j(t),
    \end{cases}
    \\
    \mathbbm{1}^\mathcal{A}_{ij}(t) &\coloneqq \label{eq:ind3}
    \begin{cases}
    1 & i > j,\\
    0 & i < j.
    \end{cases}
\end{align}
Next, we define the interaction dynamics by combining \eqref{eq:ind1} - \eqref{eq:ind3} into a single indicator function.
\begin{definition} \label{df:interactionDynamics}
    We define the \emph{interaction dynamics} between any agent $i\in\mathcal{A}$ and another agent $j\in\mathcal{N}_i(t), j\neq i$ as
    \begin{align}
        \mathbbm{1}_{ij}^C(t) &= \mathbbm{1}^{\mathcal{N}}_{ij}(t) 
        + \big(1 - \mathbbm{1}^{\mathcal{N}}_{ij}(t)\big)\big(1 - \mathbbm{1}^{\mathcal{N}}_{ji}(t)\big) \notag\\ &\Big( \mathbbm{1}^{\mathcal{E}}_{ij}(t) 
          + \big(1 - \mathbbm{1}^{\mathcal{E}}_{ij}(t)\big)\big(1 - \mathbbm{1}^{\mathcal{E}}_{ji}(t)\big) \mathbbm{1}^{\mathcal{A}}_{ij}(t) \Big), \label{eq:interactionDynamics}
    \end{align}
    where $\mathbbm{1}^C_{ij} = 1$ implies agent $i$ has priority over agent $j$, and $\mathbbm{1}^C_{ij}=0$ implies that agent $j$ has priority over agent $i$.
\end{definition}

The interaction dynamics are instantaneously and noiselessly measured and communicated by each agent under Assumption \ref{smp:perfect}. Whenever two agents have a conflict (i.e., share an assigned goal, or have overlapping assignments) \eqref{eq:interactionDynamics} is used to impose an order on the agents such that higher priority agents act first.

\begin{remark}\label{rmk:unambiguous}
For any pair of agents $i\in\mathcal{A}, j\in\mathcal{N}_i(t), j\neq i$, it is always true that $\mathbbm{1}_{ij}^C(t) = 1 -\mathbbm{1}_{ji}^C(t)$, i.e., the outcome of the interaction dynamics \eqref{eq:interactionDynamics} is always unambiguous, and therefore it imposes an order on any pair of agents.
\end{remark}

Remark \ref{rmk:unambiguous} can be proven by simply enumerating all cases of \eqref{eq:ind1} - \eqref{eq:ind3}.

\section{Optimal Goal Assignment} \label{sec:assignment}

The optimal solution of the assignment problem must assign each agent to a goal such that the total unconstrained energy cost, given by \eqref{eq:uncEnergy}, is minimized.
In our framework, each agent $i\in\mathcal{A}$ only has information about the positions of its neighbors, $j\in\mathcal{N}_i(t)$, and the goal positions prescribed by $\mathcal{G}$.
Agent $i$ derives the goal assignment using a binary matrix $\mathbf{A}_i$(t), which we define next.
\begin{definition}\label{def:assignmentMatrix}
    For each agent $i\in\mathcal{A}$, we define the \emph{assignment matrix}, $\mathbf{A}_i(t)$, as an $|\mathcal{N}_i(t)|\times M$ matrix with binary elements. The elements of $\mathbf{A}_i(t)$ map each agent to exactly one goal, and each goal to no more than one agent.
\end{definition}

The assignment matrix for agent $i\in\mathcal{A}$ assigns all agents in $\mathcal{N}_i(t)$ to goals by considering the cost \eqref{eq:uncEnergy}. We discuss the details of the optimal assignment problem later in this section.

Next we define the prescribed goal, which determines how each agent $i\in\mathcal{A}$ assigns itself a goal.
\begin{definition} \label{def:prescribedGoal}
	We define the \emph{prescribed goal} for agent $i\in\mathcal{A}$ as the goal assigned to agent $i$ by the rule,
	\begin{equation}
		\mathbf{p}_i^a(t) \in \big\{\mathbf{p}_k \in \mathcal{G} ~|~ a_{ik}=1,~ a_{ik} \in\mathbf{A}_i(t), k\in\mathcal{F} \big\},
	\end{equation}
	where $\mathbf{A}_i(t)$ is the assignment matrix, and the right hand side is a singleton set, i.e., agent $i$ is assigned to exactly one goal.
\end{definition}

Next, we present the goal assignment algorithm in terms of some agent $i\in\mathcal{A}$. However, as this framework is cooperative, each step is performed by all individuals simultaneously.

In some cases, multiple agents may select the same prescribed goal. This may occur when two agents $i\in\mathcal{A}, j\in\mathcal{N}_i(t), j\neq i$ have different neighborhoods and use conflicting information to solve their local assignment problem. 
This motivates the introduction of competing agents, which we define next.
\begin{definition}\label{def:competingAgents}
	For agent $i\in\mathcal{A}$, we define the set of \textit{competing agents} as
	\begin{equation*}
		\mathcal{C}_i(t) = \Big\{k\in\mathcal{N}_i(t) ~|~ \mathbf{p}_k^a(t) = \mathbf{p}_i^a(t) \Big\}.
	\end{equation*}
\end{definition}
When $\big|\mathcal{C}_i\big| > 1$ there are at least two agents, $i,j\in\mathcal{N}_i(t)$ which are assigned to the same goal.
In this case, all but one agent in $\mathcal{C}_i(t)$ must be \emph{permanently banned} from the goal $\mathbf{p}_i^a(t)$. Next, we define the banned goal set.
\begin{definition}\label{def:bannedSet}
    The \emph{banned goal set} for agent $i\in\mathcal{A}$ is defined as
    \begin{align} \label{eqn:bannedUpdate}
        \mathcal{B}_i(t) &= \Big\{ g\in\mathcal{F} ~\Big|~ \mathbf{p}_i^a(\tau) = \mathbf{p}_g(\tau)\in\mathcal{G}, \notag\\
        &\Big(\prod_{j\in\mathcal{C}_i(\tau),j\neq i} \mathbbm{1}_{ij}^c(\tau) \Big) = 0, ~ \exists ~\tau\in[t_i^0, t] 
    ~\Big\},
    \end{align}
    i.e., the set of all goals which agent $i\in\mathcal{A}$ had a conflict over and did not have priority per Definition \ref{df:interactionDynamics}.
\end{definition}

Banning is achieved by applying \eqref{eqn:bannedUpdate} to all agents $j\in\mathcal{C}_i(t)$ whenever $|\mathcal{C}_i(t)| > 1$. After the banning step is completed, agent $i\in\mathcal{A}$ checks if the size of $\mathcal{B}_i(t)$ has increased. If so, agent $i$ increases the value of  $t_i^f$ by
\begin{equation}
    t_i^f = t + T,
\end{equation}
where $t$ is the current time, and $T\in\mathbb{R}_{>0}$ is a system parameter. This allows agent $i$ a sufficient amount of time to reach its new goal.

Next, each agent broadcasts its new set of banned goals to all of its neighbors. Any agent who was banned from $\mathcal{C}_i(t)$ assigns itself to a new goal. However, this may cause new agents to enter $\mathcal{C}_i(t)$ as they are banned from other goals. To ensure each agent $j\in\mathcal{N}_i(t)$ is assigned to a unique goal, the assignment and banning steps are iterated until the condition
\begin{equation} \label{eq:conflictCondition}
    \big|\mathcal{C}_j(t)\big| = 1, \quad \forall j\in\mathcal{N}_i(t),
\end{equation}
is satisfied. For a given neighborhood $\mathcal{N}_i(t), i\in\mathcal{A}$, some number of agents will be assigned to the goal $g\in\mathcal{F}$. After the first banning step, all agents except the one which was assigned to goal $g$ are permanently banned and may never be assigned to it again. If additional agents are assigned to $g$, then this process will repeat for at most $N-1$ iterations. Afterwards every goal $g$ will have at most one agent from $\mathcal{N}_i(t)$ assigned to it. Thus, we will have $|\mathcal{C}_j(t)| = 1$ for all $j\in\mathcal{N}_i(t)$ for every $i\in\mathcal{A}$.

We enforce the banned goals through a constraint on the assignment problem, which follows.
\begin{problem}[Goal Assignment] \label{prb:assignment}
	Each agent $i\in\mathcal{A}$ selects its prescribed goal (Definition \ref{def:prescribedGoal}) by solving the following binary program
	\begin{align}
	\underset{a_{jk}\in\mathbf{A}_i}{\text{min}} \Bigg\{
	\sum_{j\in\mathcal{N}_i(t)} \sum_{k\in\mathcal{F}} a_{jk} E^k_j(t) \Bigg\}
	\end{align}
	subject to:
	\begin{align}
			 \sum_{j\in\mathcal{F}} a_{jk} &= 1, \quad k\in\mathcal{N}_i(t), \label{eqn:p11} \\
			 \sum_{k\in\mathcal{N}_i(t)} a_{jk} &\leq 1, \quad j\in\mathcal{F},\label{eqn:p12}\\
			 a_{jk} &= 0, \quad \forall~j\in\mathcal{B}_k(t),~ k\in\mathcal{N}_i(t),\label{eqn:p13} \\
			 a_{jk} &\in \{0, 1\} \notag.
	\end{align}
	This process is repeated by each agent, $i\in\mathcal{A}$, until \eqref{eq:conflictCondition} is satisfied for all $j\in\mathcal{N}_i(t)$.
\end{problem}

As the conflict condition in Problem \ref{prb:assignment} explicitly depends on the neighborhood of agent $i\in\mathcal{A}$, Problem \ref{prb:assignment} may need to be recalculated each time the neighborhood of agent $i$ switches. The assignments generated by Problem \ref{prb:assignment} are guaranteed to bring each agent to a unique goal; we show this with the help of Assumption \ref{smp:bans} and Lemma \ref{lma:solutionExistance}.

\begin{assumption}\label{smp:bans}
For every agent $i\in\mathcal{A}$, for all $t\in[t_i^0, t_i^f]$, the inequality $\big|\big(\mathcal{F} \setminus \bigcup_{j\in\mathcal{N}_i(t)} \mathcal{B}_j(t)\big) 
\geq |\mathcal{N}_i(t)|$ holds.
\end{assumption}

Assumption \ref{smp:bans} is a condition that is sufficient but not necessary to prove convergence of our proposed optimal controller.
Intuitively, Assumption \ref{smp:bans} only requires that one agent does not ban many agents from many goals.
Due to the minimum-energy nature of our framework, this scenario is unlikely; additionally, permanent banning may be relaxed to temporary banning in a way that Assumption \ref{smp:bans} is always satisfied.

\begin{lemma}[Solution Existence] \label{lma:solutionExistance}
	Under Assumption \ref{smp:bans}, the feasible region of Problem \ref{prb:assignment} is nonempty for agent $i$.
\end{lemma}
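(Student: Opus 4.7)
The plan is to construct an explicit feasible assignment directly from Assumption \ref{smp:bans}, rather than invoke any existence theorem. The key observation is that Assumption \ref{smp:bans} is \emph{much} stronger than what is needed: it hands us goals that are unbanned by every neighbor simultaneously, so no clever matching is required.

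First I would define the set of ``universally unbanned'' goals
\[
\mathcal{F}^\star(t) \;:=\; \mathcal{F} \,\setminus\, \bigcup_{j\in\mathcal{N}_i(t)} \mathcal{B}_j(t),
\]
which by Assumption \ref{smp:bans} satisfies $|\mathcal{F}^\star(t)| \geq |\mathcal{N}_i(t)|$. Because any $g\in\mathcal{F}^\star(t)$ lies outside $\mathcal{B}_j(t)$ for every $j\in\mathcal{N}_i(t)$, such a goal is permissible for every agent in the neighborhood. I would then pick an arbitrary injection $\sigma:\mathcal{N}_i(t)\hookrightarrow\mathcal{F}^\star(t)$ (for instance, enumerate both sets and pair them off in order, which is possible precisely by the cardinality bound), and propose the assignment matrix $a_{j,\sigma(j)}=1$ for each $j\in\mathcal{N}_i(t)$ and $a_{jk}=0$ otherwise.

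Finally I would verify that this candidate satisfies every constraint of Problem \ref{prb:assignment}. Constraint \eqref{eqn:p11} follows because each agent $j$ has exactly one nonzero entry, at column $\sigma(j)$; constraint \eqref{eqn:p12} follows from injectivity of $\sigma$, so every column contains at most one $1$; and constraint \eqref{eqn:p13} follows because $\sigma(j)\in\mathcal{F}^\star(t)\subseteq\mathcal{F}\setminus\mathcal{B}_j(t)$, so no banned entry is ever set to $1$. Binarity holds by construction, and the feasible region is therefore nonempty.

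There is essentially no obstacle; the argument is a one-line counting claim once $\mathcal{F}^\star(t)$ is defined. The only issue worth flagging is notational: the constraints in the problem statement read $a_{jk}=0$ for $j\in\mathcal{B}_k(t)$, whereas the natural reading is ``agent $j$ cannot be assigned to banned goal $k$'', i.e., $a_{jk}=0$ for $k\in\mathcal{B}_j(t)$. I would adopt the natural interpretation (consistent with Definitions \ref{def:assignmentMatrix}--\ref{def:bannedSet}) in the writeup. A slicker alternative, which I would mention only in passing, is to apply Hall's marriage theorem to the bipartite graph whose edges are the non-banned pairs; Hall's condition holds because for any $S\subseteq\mathcal{N}_i(t)$ the neighborhood of $S$ contains $\mathcal{F}^\star(t)$, whose size is already at least $|\mathcal{N}_i(t)|\geq|S|$. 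The direct injection argument above is, however, shorter and entirely self-contained.
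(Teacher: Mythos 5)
Your proof is correct and follows essentially the same route as the paper's: you define the same set of universally unbanned goals (the paper calls it $\mathcal{L}_i(t)$), invoke the same cardinality bound from Assumption \ref{smp:bans} to obtain an injection from $\mathcal{N}_i(t)$ into that set, and verify constraints \eqref{eqn:p11}--\eqref{eqn:p13} in the same way. Your explicit construction of the matrix entries and your flag of the index inconsistency in \eqref{eqn:p13} are minor improvements in rigor, but the argument is the paper's argument.
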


\begin{proof}
    Let the set of goals available to all agents in the neighborhood of agent $i\in\mathcal{A}$ be denoted by the set
    \begin{equation} \label{eqn:feasibleGoals}
        \mathcal{L}_i(t) = \{p\in\mathcal{F} ~|~ p\not\in\mathcal{B}_j(t), ~ \forall j\in\mathcal{N}_i(t) \}.
    \end{equation}
    Let the injective function $w : \mathcal{A} \to \mathcal{F}$ map each agent to a goal. 
    By Assumption \ref{smp:bans}, $|\mathcal{N}_i(t)| \leq |\mathcal{L}_i(t)|$, thus a function $w$ exists.
    As $w$ is injective, the imposed mapping must satisfy (\ref{eqn:p11}) and (\ref{eqn:p12}). Likewise, $\mathcal{L}_i(t)\bigcap\mathcal{B}_j(t) = \emptyset$ for all $j\in\mathcal{N}_i(t)$. Thus, $w$ must satisfy \eqref{eqn:p13}. Therefore, the mapping imposed by the function $w$ is a feasible solution to Problem \ref{prb:assignment}.
\end{proof}
Next, we show that for a sufficiently large value of $T$ the convergence of all agents to goals is guaranteed.

\begin{theorem}[Assignment Convergence] \label{thm:assignmentConvergence}
	Under Assumption \ref{smp:bans}, for sufficiently large values of the initial $t_i^f$ and $T$, and if the energy-optimal trajectories for agent $i\in\mathcal{A}$ never increase the unconstrained energy cost \eqref{eq:uncEnergy}, then $t_i^f$ must have an upper bound for all $i\in\mathcal{A}$.
\end{theorem}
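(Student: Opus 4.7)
The plan is to establish an explicit upper bound of the form $t_i^f \leq t_i^f(0) + M\,T$, where $t_i^f(0)$ denotes the initial terminal time of agent $i$, by (i) counting how many times $t_i^f$ can be updated over the lifetime of agent $i$ and (ii) bounding the size of each update. First, I would observe that, per the update rule stated right after Definition \ref{def:bannedSet}, the terminal time $t_i^f$ is revised only when $|\mathcal{B}_i(t)|$ strictly increases, i.e., when agent $i$ is newly banned from some goal. Because banning is permanent (Definition \ref{def:bannedSet}) and $\mathcal{B}_i(t) \subseteq \mathcal{F}$ with $|\mathcal{F}| = M$, it follows that $t_i^f$ can be revised at most $M$ times along the lifetime of agent $i$.

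Next, I would bound the magnitude of each revision by a straightforward induction. Let $t_k$ be the time of the $k$-th ban event involving agent $i$ and let $t_i^f(k)$ denote the value of $t_i^f$ immediately after it. For agent $i$ to suffer the $k$-th ban it must still be active, so $t_k \leq t_i^f(k-1)$; the assignment rule $t_i^f(k) = t_k + T$ then yields $t_i^f(k) \leq t_i^f(k-1) + T$. Induction on $k$ gives $t_i^f(k) \leq t_i^f(0) + k\,T$, and combining with the ban-count bound $k \leq M$ produces the uniform upper bound $t_i^f \leq t_i^f(0) + M\,T$.

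The main obstacle I anticipate is justifying the ``sufficiently large'' hypothesis on $t_i^f(0)$ and $T$, i.e., that one can actually choose these quantities so that agent $i$ remains active long enough to realize all of its at-most-$M$ ban events and still reach its final prescribed goal in an energy-feasible way. Here I would appeal to the remaining hypotheses. Assumption \ref{smp:bans} combined with Lemma \ref{lma:solutionExistance} ensures that a feasible prescribed goal exists at each banning event, so every update corresponds to a well-posed reassignment rather than a failure state. The hypothesis that the unconstrained energy cost \eqref{eq:uncEnergy} is non-increasing along the optimal trajectory then guarantees that between consecutive ban events agent $i$ makes monotone progress toward its current prescribed goal, so a fixed but sufficiently large $T$, for instance chosen to exceed the worst-case traversal time across the convex hull of $\mathcal{G}$ under the bound $u_{\max}$, suffices to make each re-routed trajectory feasible. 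Assembling these ingredients closes the argument: permanent banning caps the number of updates at $M$, each update adds at most $T$ to $t_i^f$, and the monotone-energy assumption keeps the growing sequence $\{t_i^f(k)\}$ consistent with the dynamics, yielding the claimed uniform upper bound for every $i \in \mathcal{A}$.
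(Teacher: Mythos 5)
Your counting argument is correct for the literal conclusion and is considerably more direct than the paper's proof: since $t_i^f$ is revised only when $|\mathcal{B}_i(t)|$ strictly increases, banning is permanent, and $\mathcal{B}_i(t)\subseteq\mathcal{F}$, there are at most $M$ revisions, each of the form $t_i^f \leftarrow t + T$ with $t$ no later than the current $t_i^f$; induction gives $t_i^f \leq t_{i,\text{initial}}^f + MT$, which is exactly the bound the paper states in its ``finite sequence'' case. The paper, however, takes a genuinely different route. It studies the sequence $\{g_n\}$ of goals \emph{assigned} to agent $i$ --- which can change without any ban occurring, because the goals are moving and the relative unconstrained energies $E_i^k(t)$ to different goals can cross repeatedly --- and uses a Bolzano--Weierstrass argument to rule out an infinitely oscillating (periodic) assignment sequence: periodicity would force $E_i^g$ and $E_i^k$ to be simultaneously non-increasing toward distinct goals infinitely often, contradicting the minimum goal spacing. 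That is precisely where the hypothesis that the energy-optimal trajectories never increase \eqref{eq:uncEnergy} does its work. In your argument that hypothesis is never load-bearing --- you invoke it only to gesture at ``monotone progress'' --- which is a signal that you are proving a weaker fact than the one the paper is after. Your bound shows $t_i^f$ cannot blow up, but it does not exclude the scenario in which agent $i$ is reassigned between two never-banned goals forever and thus never settles; ruling that out is the substantive content of ``assignment convergence'' and the reason the paper's proof is structured around the assignment sequence rather than the ban count. If you keep your argument, you should at least state explicitly that reassignment without banning leaves $t_i^f$ unchanged (so it cannot threaten the bound), and acknowledge that termination of the reassignment process itself requires the separate monotone-energy argument.
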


\begin{proof}
    Let $\{g_n\}_{n\in\mathbb{N}}$ be the sequence of goals assigned to agent $i\in\mathcal{A}$ by the solution of Problem \ref{prb:assignment}.  By Lemma \ref{lma:solutionExistance}, $\{g_n\}_{n\in\mathbb{N}}$ must not be empty, and the elements of this sequence are natural numbers bounded by $1\leq g_n \leq M$. Thus, the range of this sequence is compact, and the sequence must be either (1) finite, or (2) convergent, or (3) periodic.
    
    $1$) For a finite sequence there is nothing to prove, as $t_i^f$ is upper bounded by $t_{i,\text{initial}}^f + MT$.
    
    $2$) Under the discrete metric, an infinite convergent sequence requires that there exists $\
    N\in\mathbb{N}_{>0}$ such that $g_n = p$ for all $n>N$ for some formation index $p\in\mathcal{F}$. This reduces to case $1$, as $t_i^f$ does not increase for repeated assignments to the same goal.
    
    $3$) By the Bolzano-Weierstrass Theorem, an infinite non-convergent sequence $\{g_n\}_{n\in\mathbb{N}}$ must have a convergent subsequence, i.e., agent $i$ is assigned to some subset of goals $\mathcal{I}\subseteq\mathcal{F}$
    infinitely many times with some constant number of intermediate assignments, $P_{g}$, for each goal $g\in\mathcal{I}$.
    Necessarily, $\mathcal{I}\bigcap\mathcal{B}_i(t) = \emptyset$ for all $t\in[t_i^0,t_i^f]$ from the construction of the banned goal set.
    This implies that, by the update method of $t_i^f$, the position of all goals, $g\in\mathcal{I}$ must only be considered at time $t_i^f$.
    
    This implies that the goals available to agent $i$, i.e., $\mathcal{I} = \mathcal{F}\setminus\mathcal{B}_i(t_i^f)$, must be shared between $n>0$ other periodic agents. Hence, at some time $t_1$  a goal, $g\in\mathcal{I}$, must be an optimal assignment for agent $i$, a non optimal assignment at time $t_2>t_1$ and an optimal assignment again at time $t_3$ which corresponds to the ${P_g}^{th}$ assignment.
    As $t_3>t_2>t_1$, the energy required to move agent $i$ to goal $g$ satisfies
    \begin{align}
        E_i^g(t_1) &\leq E_i^k(t_1), \\
        E_i^k(t_2) &\leq E_i^g(t_2), \\
        E_i^g(t_3) &\leq E_i^k(t_3),
    \end{align}
    for any other goal $k\in\mathcal{I}, k\neq g$. Therefore, for agent $i$ to follow an energy optimal trajectory under our premise, it must never increase the energy required to reach is assigned goal, which implies
    \begin{align}
        E_i^g(t_1) &\geq E_i^g(t_2), \\
        E_i^k(t_2) &\geq E_i^k(t_3),
    \end{align}
    this implies
    \begin{equation}
        E_i^k(t_1) \geq E_i^k(t_3),
    \end{equation}
    which
    is only possible if agent $i$ simultaneously approaches all goals $k\in\mathcal{I}$. This implies that goals $g$ and $k$ are arbitrarily close, which violates the minimum spacing requirements of the goals; therefore no such periodic behavior may exist.
\end{proof}

Note that Theorem \ref{thm:assignmentConvergence} bounds the arrival time of agent $i\in\mathcal{A}$ to any goal $g\in\mathcal{F}$. A similar bound may be found for the total energy consumed, i.e.,
\begin{equation*}
    E_i^g(t) \leq \frac{1}{2}(t_{i,\text{initial}}^f + MT)\cdot\max\big\{|u_{\min}|, |u_{\max}|\big\}^2.
\end{equation*}

Next, we formulate the optimal trajectory generation problem for each agent and prove that the resulting trajectories always satisfy the premise of Theorem \ref{thm:assignmentConvergence}.

\section{Optimal Trajectory Generation} \label{sec:trajectory}

After the goal assignment is complete, each agent must generate a collision-free trajectory to their assigned goal.
The trajectories must minimize the agent's total energy cost subject to dynamic, boundary, and collision constraints.
The initial and final state constraints for each agent $i\in\mathcal{A}$ are given by
\begin{align}
    \mathbf{p}_i(t_i^0) &= \mathbf{p}_i^0, &\mathbf{v}_i(t_i^0) &= \mathbf{v}_i^0, \label{eqn:ICs}\\
	\mathbf{p}_i(t_i^f) &= \mathbf{p}_i^a(t_i^f), &\mathbf{v}_i(t_i^f) &= \dot{\mathbf{p}}_i^a(t_i^f), \label{eqn:BCs}
\end{align}
where the conditions at $t_i^f$ come from the solution of Problem \ref{prb:assignment}.

Whenever an agent must steer to avoid collisions, we will apply the agent interaction dynamics (Definition \ref{df:interactionDynamics}) to impose an order on the agents such that lower priority agents must steer to avoid the higher priority ones. Thus, we may simplify the collision avoidance constraint for agent $i\in\mathcal{A}$ to
\begin{align} \label{eqn:collisionOrdered}
    ||\mathbf{s}_{ij}(t)|| \geq 2R, ~&~ \forall\, j \in \{ k\in\mathcal{A} ~|~ \mathbbm{1}_{ik}(t) = 0  \}, \\
     &\forall\,t\in[t_i^0, t_i^f],\notag
\end{align}
which will always guarantee safety for all agents.

We may then formulate the decentralized optimal trajectory generation problem.
\begin{problem} \label{prb:trajectory}
	For each agent $i\in\mathcal{A}$, find the optimal control input, $\mathbf{u}_i(t)$, which minimizes the energy consumption of agent $i$ and satisfies its boundary conditions and safety constraints.
	\begin{align}	\label{eqn:hamiltonianOptimization}
	    &\min_{\mathbf{u}_i(t)}  \frac{1}{2} \int_{t_i^0}^{t_i^f} ||\textbf{u}_i(t)||^2 ~ dt \\
        &\text{subject to:} ~~ \eqref{eqn:pDynamics}, \eqref{eqn:vDynamics}, \eqref{eqn:ICs}, \eqref{eqn:BCs}, \text{ and } \eqref{eqn:collisionOrdered}. \notag
	    \end{align}
\end{problem}

By imposing an order on the agents, we can show that the solution of Problem \ref{prb:trajectory} will always satisfy the premise of Theorem \ref{thm:assignmentConvergence}. First, Lemma \ref{lma:unconstrainedEnergyReduced} shows that an unconstrained trajectory must never increase the energy required to reach a goal.

\begin{lemma} \label{lma:unconstrainedEnergyReduced}
    For any agent $i\in\mathcal{A}$, following the unconstrained trajectory, the energy cost \eqref{eq:uncEnergy} required to reach a fixed goal $g\in\mathcal{F}$ is not increasing.
\end{lemma}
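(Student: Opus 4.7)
The plan is to recognize that $E_i^g(t)$ is defined in \eqref{eq:uncEnergy} as the tail integral $\int_{t}^{t_i^f} \|\mathbf{u}_i(\tau)\|^2\, d\tau$ along the \emph{fixed} unconstrained optimal trajectory \eqref{eq:uUnc}--\eqref{eq:pUnc} connecting the current state to the goal state at $t_i^f$. Since goal $g$ is fixed and the agent's boundary conditions \eqref{eqn:ICs}--\eqref{eqn:BCs} pin down the integration constants $\mathbf{a}_i, \mathbf{b}_i, \mathbf{c}_i, \mathbf{d}_i$, the control history $\mathbf{u}_i(\tau)$ does not depend on $t$; only the lower limit of integration does.

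The proof then reduces to a single application of the fundamental theorem of calculus. Differentiating the tail integral,
\begin{equation*}
    \frac{d}{dt} E_i^g(t) \;=\; -\,\|\mathbf{u}_i(t)\|^2 \;\leq\; 0,
\end{equation*}
since the squared Euclidean norm is non-negative. Therefore $E_i^g(t)$ is monotonically non-increasing on $[t_i^0, t_i^f]$. As a sanity check, one can also differentiate the closed-form expression \eqref{eq:uncEnergy} term by term; collecting terms yields $-(a_{i,x}t+b_{i,x})^2 - (a_{i,y}t+b_{i,y})^2$, which matches $-\|\mathbf{u}_i(t)\|^2$ via \eqref{eq:uUnc}.

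There is no real obstacle here; the only thing to be careful about is emphasizing that the result presumes agent $i$ \emph{actually follows} the unconstrained trajectory (so the Bellman-style ``principle of optimality'' applies and the tail is still the unconstrained optimum from the current state). If instead the trajectory were recomputed after a perturbation, monotonicity need not hold, but that case falls outside the hypothesis. I would close by noting that this lemma verifies the premise of Theorem~\ref{thm:assignmentConvergence} for unconstrained motion, which will subsequently be combined with the collision-ordered control in Problem~\ref{prb:trajectory} to extend the monotonicity to the fully constrained setting.
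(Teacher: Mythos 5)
Your proof is correct and follows essentially the same route as the paper: the paper also differentiates the tail integral $\int_t^{t_i^f}\|\mathbf{u}_i(\tau)\|^2\,d\tau$ with respect to its lower limit (written out as a difference-quotient limit) and concludes the derivative is $-\|\mathbf{u}_i(t)\|^2\le 0$. Your additional remarks about the fixed control history and the closed-form sanity check are consistent with, but not required by, the paper's argument.
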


\begin{proof}

    We may write the derivative of \eqref{eq:uncEnergy} along an unconstrained trajectory as
    \begin{align}
        \frac{dE_i^g(t)}{dt} &= \lim_{\delta\to0} \frac{1}{\delta}\Bigg( \int_{t+\delta}^{t_i^f} ||\mathbf{u}_i(\tau)||^2 d\tau - \int_{t}^{t_i^f} ||\mathbf{u}_i(\tau)||^2 d\tau \Bigg)\notag \\
        &= - \lim_{\delta\to0} \frac{1}{\delta}\int_{t}^{t+\delta}||\mathbf{u}_i(\tau)||^2 d\tau,
    \end{align}
    which is never positive. Therefore, \eqref{eq:uncEnergy} is never increasing.
\end{proof}

Next, we introduce Theorem \ref{thm:convergenceSatisfied}, which proves the premise of Theorem \ref{thm:assignmentConvergence} is always satisfied by any trajectory which is a feasible solution to Problem \ref{prb:trajectory}. 

\begin{theorem} \label{thm:convergenceSatisfied}
    If a solution to Problem \ref{prb:trajectory} exists for all agents, then Theorem \ref{thm:assignmentConvergence} is satisfied as long as Assumption \ref{smp:bans} holds.
\end{theorem}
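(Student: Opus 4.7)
The plan is to verify that every feasible solution of Problem~\ref{prb:trajectory} satisfies the hypothesis of Theorem~\ref{thm:assignmentConvergence}---that the unconstrained energy cost \eqref{eq:uncEnergy} is non-increasing along each agent's actual trajectory---and then invoke Theorem~\ref{thm:assignmentConvergence} directly. The backbone is Lemma~\ref{lma:unconstrainedEnergyReduced} combined with the asymmetric ordering baked into \eqref{eqn:collisionOrdered}.

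First I would observe that, because \eqref{eqn:collisionOrdered} is imposed only against strictly higher-priority agents and Remark~\ref{rmk:unambiguous} makes the interaction dynamics a strict total order, the globally highest-priority agent at any instant sees no active collision constraints. Its minimizer of Problem~\ref{prb:trajectory} therefore coincides with the unconstrained trajectory \eqref{eq:uUnc}--\eqref{eq:pUnc}, so Lemma~\ref{lma:unconstrainedEnergyReduced} applies verbatim and delivers $\dot E_i^g(t)\le 0$. I would then process the remaining agents in order of decreasing priority: at each step the trajectories of strictly higher-priority agents are fixed by the preceding step, so the current agent solves a standard constrained optimal control problem against known, time-varying obstacles, and the existence premise of the theorem supplies the required feasible minimizer.

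For this constrained step I would repeat the differentiation from Lemma~\ref{lma:unconstrainedEnergyReduced} on the actual trajectory,
\begin{equation*}
  \frac{d}{dt}\int_{t}^{t_i^f}\tfrac{1}{2}\|\mathbf{u}_i(\tau)\|^2\,d\tau = -\tfrac{1}{2}\|\mathbf{u}_i(t)\|^2\le 0.
\end{equation*}
Since \eqref{eqn:BCs} guarantees that the trajectory delivers agent $i$ to its prescribed goal at $t_i^f$, this integral is admissible as an unconstrained cost from the current state, so it upper-bounds $E_i^g(t)$; on intervals where \eqref{eqn:collisionOrdered} is inactive, the Pontryagin necessary conditions force equality, and $E_i^g$ inherits the monotonicity from the displayed derivative. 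With the premise of Theorem~\ref{thm:assignmentConvergence} thereby verified and Assumption~\ref{smp:bans} in hand, that theorem supplies the uniform upper bound on $t_i^f$ and closes the argument. The main obstacle I expect is precisely this last reconciliation---tying the constrained cost-to-go along a feasible optimal trajectory to the unconstrained $E_i^g$ during active avoidance segments, where the two are only comparable through an inequality---and I would handle it either by a complementary-slackness argument at the constraint switching times or by bounding any transient increase of $E_i^g$ by the integrated deviation from the unconstrained control.
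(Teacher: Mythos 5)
Your opening move matches the paper's: because \eqref{eqn:collisionOrdered} only constrains an agent against strictly higher-priority neighbors and Remark~\ref{rmk:unambiguous} makes the ordering total, the top-priority agent in any conflicting group is effectively unconstrained, Lemma~\ref{lma:unconstrainedEnergyReduced} applies to it, and you peel off the remaining agents by descending priority. That much is sound and is exactly the paper's decomposition.

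The gap is the one you flag yourself and do not close. On an active avoidance arc, the quantity Theorem~\ref{thm:assignmentConvergence} needs to be non-increasing is $E_i^g(t)$ as defined in \eqref{eq:uncEnergy}, i.e., the cost of the \emph{unconstrained} optimum recomputed from the agent's current state, whereas the quantity whose derivative you compute, $\int_t^{t_i^f}\tfrac12\|\mathbf{u}_i(\tau)\|^2\,d\tau$ along the committed constrained trajectory, only upper-bounds it. Monotonicity of an upper bound says nothing about monotonicity of $E_i^g$; an avoidance maneuver that pushes agent $i$ away from its goal can strictly increase $E_i^g(t)$, and neither complementary slackness at the junctions nor ``bounding the transient increase'' can recover the premise, which demands no increase at all. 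The paper closes this differently: it never attempts to control $E_i^g$ during the constrained arc. Instead it argues that a lower-priority agent $j$ and the higher-priority agent $i$ it is avoiding are assigned to distinct goals, which are themselves separated by more than $2R$, so there exists $t_j<\min\{t_i^f,t_j^f\}$ after which $\|\mathbf{s}_{ij}(t)\|>2R$; the constrained arc therefore terminates and agent $j$ finishes on an unconstrained arc to which Lemma~\ref{lma:unconstrainedEnergyReduced} applies, and this is applied recursively down the priority order. To complete your argument you need that goal-separation step (or some substitute for it); the route through monotonicity of $E_i^g$ on the constrained arc is not salvageable because the claim is false there in general.
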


\begin{proof}

    The case when any agent $i\in\mathcal{A}$ is moving with an unconstrained trajectory is covered by Lemma \ref{lma:unconstrainedEnergyReduced}, so we focus on the case when any of the safety constraints are active.

    Let $\mathcal{K}\subseteq\mathcal{A}$ be a group of agents which all have their safety constraint active over some interval $t\in[t_1, t_2]$. By Definition \ref{df:interactionDynamics}, there exists some $i\in\mathcal{K}$ such that $\mathbbm{1}_{ij}^C(t) = 1$ for all $j\in\mathcal{K},~j\neq i$. Therefore, agent $i$ satisfies Lemma \ref{lma:unconstrainedEnergyReduced} and always moves toward its assigned goal by Theorem \ref{thm:assignmentConvergence}.

    Next, consider agent $j\in\mathcal{V}\setminus\{i\}$ such that $\mathbbm{1}_{jk}^c(t) = 1$ for all $k\in\mathcal{K}\setminus\{i\}$. As agent $j$ may never be assigned to the same goal as $i$, there must exist some time $t_j < \min\{t_i^f, t_j^f\}$ such that $|\mathbf{s}_{ij}(t_j)| > 2R$ by the goal spacing rules. Thus, agent $j$ will move with an unconstrained trajectory for all $t\in[t_j, t_j^f]$. The above steps can be recursively applied until only a single agent remains, which follows an unconstrained trajectory for some finite time interval. This satisfies the premise of Theorem \ref{thm:assignmentConvergence}.
\end{proof}

\subsection{Hamiltonian Analysis}

We solve Problem \ref{prb:trajectory} by applying Hamiltonian analysis. 
We will follow the standard methodology used in optimal control theory for problems with interior point constraints. First, we start with the unconstrained solution, given by \eqref{eq:uUnc} - \eqref{eq:pUnc}. If this solution violates the state, control, or safety constraints, we piece it together with solutions corresponding to the violated constraint. These two arcs yield a set of algebraic equations that must be solved simultaneously using the boundary conditions \eqref{eqn:ICs} and \eqref{eqn:BCs} and interior conditions between the arcs. If the resulting trajectory, which includes the optimal switching time between the arcs, still violates any constraints, the new solution must be pieced together with a third arc corresponding to the new violated constraint. This process is repeated until no constraints are violated, which yields the energy-optimal state trajectory for each agent $i\in\mathcal{A}$.

The case where only the control and state constraints become active has been extensively studied in \cite{Malikopoulos2018}. Thus, we will relax the state and control constraints and only consider the safety constraint in this section. Next, we analyze the case where the collision avoidance constraint becomes active.

First, the safety constraint \eqref{eqn:collisionCondition} must be derived until the control input $\mathbf{u}_i(t)$ appears. To ensure smoothness in the derivatives we use the equivalent squared form of \eqref{eqn:collisionCondition}. This yields
\begin{equation} \label{eq:tangency}
	\mathbf{N}_i\big(t, \mathbf{x}_i(t)\big) = \begin{bmatrix}
	4R^2 - \mathbf{s}_{ij}(t)\cdot\mathbf{s}_{ij}(t)\\
	-\mathbf{s}_{ij}(t)\cdot\dot{\mathbf{s}}_{ij}(t)\\
	-\mathbf{s}_{ij}(t)\cdot\ddot{\mathbf{s}}_{ij}(t) - \dot{\mathbf{s}}_{ij}(t)
	\cdot\dot{\mathbf{s}}_{ij}(t)
	\end{bmatrix} \leq \mathbf{0},
\end{equation}
where the first two elements of $\mathbf{N}_i(t)$ are the \emph{tangency conditions} which must be satisfied at the start of a constrained arc, while the third element is augmented to the unconstrained Hamiltonian. The Hamiltonian is
\begin{align} 
	H_i = \frac{1}{2}||\mathbf{u}_i(t)||^2 + \boldsymbol{\lambda}_i^p(t)\cdot\mathbf{v}_i(t) + \boldsymbol{\lambda}_i^v(t)\cdot\mathbf{u}_i(t) \notag \\
	- \sum_{j\in\mathcal{N}_i} \mu_{ij}(t) \Big( \mathbf{s}_{ij}(t) \cdot \ddot{\mathbf{s}}_{ij}(t) + \dot{\mathbf{s}}_{ij}(t)\cdot\dot{\mathbf{s}}_{ij}(t)  \Big), \label{eq:hamiltonian}
\end{align}
where $\boldsymbol{\lambda}_i^p(t), \boldsymbol{\lambda}_i^v(t)$ are the position and velocity co-vectors, and $\mu_{ij}(t)$ is an inequality Lagrange multiplier with values
\begin{align}
	\mu_{ij}(t) = 
	\begin{cases}
		> 0  & \text{if  } {\mathbf{s}_{ij}(t) \cdot \ddot{\mathbf{s}}_{ij}(t) + \dot{\mathbf{s}}_{ij}(t)\cdot\dot{\mathbf{s}}_{ij}(t)} = 0, \\
		0  & \text{if  } {\mathbf{s}_{ij}(t) \cdot \ddot{\mathbf{s}}_{ij}(t) + \dot{\mathbf{s}}_{ij}(t)\cdot\dot{\mathbf{s}}_{ij}(t) > 0}.
	\end{cases}
\end{align}

To solve \eqref{eq:hamiltonian} for agent $i\in\mathcal{A}$, we consider two cases:
\begin{enumerate}
	\item all agents $j\in\mathcal{N}_i(t)$ satisfy $\mu_{ij} = 0$, and
	\item any agent $j\in\mathcal{N}_i(t)$ satisfies $\mu_{ij} > 0$.
\end{enumerate}

This results in a piecewise trajectory, where the first case corresponds to following an unconstrained trajectory, while the second corresponds to some collision avoidance constraints becoming active. Our task is to derive the form of the constrained arcs, then optimally piece the constrained and unconstrained arcs together. This will result in the optimal trajectory for agent $i$.

Next, we present the solution to the constrained case and the optimal time to transition between the two cases.

\subsection{Constrained Solution} \label{sec:constrainedSolution}

As with the assignment problem, the constrained solution is presented in terms of some agent $i\in\mathcal{A}$. However, the steps given here are performed simultaneously by all agents.
First, we define the conflict set.
\begin{definition} \label{def:conflictSet}
    We define the \emph{conflict set} for agent $i\in\mathcal{A}$ at time $t\in[t_i^0, t_i^f]$ as
    \begin{equation} \label{eq:conflict}
        \mathcal{V}_i(t) = \Big\{j\in\mathcal{N}_i(t) ~\big|~ \mu_{ij}(t) > 0, \mathbbm{1}_{ij}^c = 0 \Big\},
    \end{equation}
    i.e., the set of all agents which $i$ may collide with and have a higher priority than agent $i$.
\end{definition}

Agent $i$ must then steer to avoid all agents $j\in\mathcal{V}_i(t)$. To solve for the constrained trajectory we introduce Lemma \ref{lma:manyConstraints}, which considers the case when $|\mathcal{V}_i(t)| > 1.$

\begin{lemma}[Collision-Avoidance Constraints] \label{lma:manyConstraints}
    Let any agent $i\in\mathcal{A}$ be moving along a collision constrained arc such that $|\mathcal{V}_i(t)| > 1$. Then, only feasible trajectory for agent $i$ is to remain in contact with all agents $j\in\mathcal{V}_i(t)$ until agent $i$ exits this constrained arc. This unique trajectory is therefore optimal. 
\end{lemma}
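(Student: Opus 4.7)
The plan is to exploit the planarity of the workspace. For every $j\in\mathcal{V}_i(t)$ with $\mu_{ij}>0$, the first two rows of \eqref{eq:tangency} require $\mathbf{s}_{ij}\cdot\mathbf{s}_{ij}=4R^2$ and $\mathbf{s}_{ij}\cdot\dot{\mathbf{s}}_{ij}=0$, so $\mathbf{p}_i(t)$ lies on the circle of radius $2R$ centred at $\mathbf{p}_j(t)$ and the relative velocity is tangent to that circle. With $|\mathcal{V}_i(t)|>1$ all such conditions must hold simultaneously, and I would first use this observation to pin both $\mathbf{p}_i(t)$ and $\mathbf{v}_i(t)$ to at most a discrete set of configurations determined entirely by the states of the higher-priority agents, which agent $i$ treats as exogenous since they have priority per Definition \ref{df:interactionDynamics}.

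I would next carry out the geometric argument for two higher-priority neighbours $j_1,j_2\in\mathcal{V}_i(t)$. In $\mathbb{R}^2$ the circles of radius $2R$ centred at $\mathbf{p}_{j_1}$ and $\mathbf{p}_{j_2}$ meet in at most two mirror-image points, and continuity of $\mathbf{p}_i(t)$ along the constrained arc selects one of these branches and forbids any jump to the other. At that position, the pair of orthogonality relations $\mathbf{v}_i\cdot\mathbf{s}_{ij_k}=\mathbf{v}_{j_k}\cdot\mathbf{s}_{ij_k}$, $k=1,2$, is a $2\times 2$ linear system in $\mathbf{v}_i\in\mathbb{R}^2$ whose coefficient matrix is invertible whenever $\mathbf{s}_{ij_1}$ and $\mathbf{s}_{ij_2}$ are linearly independent, which holds at any non-tangent intersection of the two circles. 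The argument extends to $|\mathcal{V}_i(t)|\geq 2$ by picking any two such higher-priority agents and treating the remaining active constraints as redundant witnesses of the same configuration.

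To finish, I would propagate this uniqueness over the whole arc. The trajectories $\mathbf{p}_{j_k}(t)$ and $\mathbf{v}_{j_k}(t)$ of the higher-priority neighbours are continuous, so the forced position and velocity of agent $i$ vary continuously in $t$, and the control $\mathbf{u}_i(t)$ is recovered by one further differentiation and substitution into \eqref{eqn:vDynamics}. Because this is the only trajectory that is simultaneously compatible with every active safety constraint, it is automatically the minimiser of the cost in \eqref{eqn:hamiltonianOptimization} over the constrained arc. The main obstacle I anticipate is the degenerate case $\|\mathbf{p}_{j_1}-\mathbf{p}_{j_2}\|\to 4R$, where the two circles become tangent and the $2\times 2$ system above loses rank; I plan to show that such degeneracies coincide exactly with a change in $|\mathcal{V}_i(t)|$, so they mark the entry to or exit from the multi-agent constrained arc rather than interior instants, and therefore do not threaten uniqueness while the arc is in effect.
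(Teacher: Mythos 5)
Your proposal is correct and follows essentially the same route as the paper: the published proof pins $\mathbf{p}_i(t)$ as the apex of an isosceles triangle with two legs of length $2R$ to $j,k\in\mathcal{V}_i(t)$ --- i.e., the intersection of the two radius-$2R$ circles you describe --- and concludes that this unique feasible trajectory must be optimal. Your version is in fact more careful than the paper's, since you additionally resolve the two-branch ambiguity by continuity, recover $\mathbf{v}_i$ and $\mathbf{u}_i$ explicitly from the tangency conditions, and flag the tangency degeneracy at $\|\mathbf{s}_{j_1 j_2}\|=4R$, none of which the published proof addresses.
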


\begin{proof}
    For any two agents $j,k\in\mathcal{V}_i(t)$ we have $||\mathbf{s}_{ij}(t)|| = 2R$ and $||\mathbf{s}_{ik}|| = 2R$. 
    This implies that the points $p_i(t)$, $p_j(t)$, and $p_k(t)$ must form an isosceles triangle with two edges of length $2R$ and base of length $2R \leq ||\mathbf{s}_{jk}(t)|| \leq 4R$.
    Therefore, the only feasible trajectory of agent $i$ is to maintain the isosceles triangle between $i, j,$, and $k$.
    As this is the only one feasible trajectory for agent $i$, it must be the optimal trajectory.
\end{proof}

Note that Lemma \ref{lma:manyConstraints} holds for a single constrained arc. As such, agent $i\in\mathcal{A}$ may exit to a different constrained arc with a new set $\mathcal{V}_i(t)$ or may exit to an unconstrained arc.

Next we consider the case where agent $i$ moves along a constrained arc with $|\mathcal{V}_i(t)| = 1$.
First, we use the 
Euler-Lagrange conditions to obtain,
\begin{align}
    \frac{\partial H_i}{\partial \mathbf{u}_i} &= 0, \label{eq:optimality}\\
    - \dot{\boldsymbol{\lambda}}_i &= \frac{\partial H_i}{\partial \mathbf{x}_i}. \label{eq:el}
\end{align}
Application of \eqref{eq:optimality} to \eqref{eq:hamiltonian} yields
\begin{align}
    \mathbf{u}_i(t) = -\boldsymbol{\lambda}_i^v(t) - \sum_{j\in\mathcal{V}_i(t)}\mu_{ij}(t)~\mathbf{s}_{ij}(t), \label{eq:uMany}
\end{align}
while \eqref{eq:el} results in
\begin{align}
    -\dot{\boldsymbol{\lambda}}_i^p(t) &= \sum_{j\in\mathcal{V}_i(t)}\mu_{ij}(t)~\ddot{\mathbf{s}}_{ij}(t),\label{eq:lpMany}\\
    -\dot{\boldsymbol{\lambda}}_i^v(t) &= \boldsymbol{\lambda}_i^p + \sum_{j\in\mathcal{V}_i(t)}\mu_{ij}(t)~\dot{\mathbf{s}}_{ij}(t).\label{eq:lvMany}
\end{align}

As $|\mathcal{V}_i(t)| = 1$, the optimality condition and Euler-Lagrange equations become
\begin{align}
    \mathbf{u}_i(t) &= -\boldsymbol{\lambda}_i^v(t) - \mu_{ij}(t)~\mathbf{s}_{ij}(t),\label{eq:u2}\\
    -\dot{\boldsymbol{\lambda}}_i^p(t) &= \mu_{ij}(t)~\ddot{\mathbf{s}}_{ij}(t),\label{eq:lpdot2}\\
    -\dot{\boldsymbol{\lambda}}_i^v(t) &= \boldsymbol{\lambda}_i^p + \mu_{ij}(t)~\dot{\mathbf{s}}_{ij}(t),\label{eq:lvdot2}
\end{align}
where $j\in\mathcal{V}_i(t)$.
We denote the relative speed between two agents $i$ and $j$ as
\begin{equation} \label{eq:relativeSpeed}
    a_{ij}(t) = ||\dot{\mathbf{s}}_{ij}(t)||.
\end{equation}

Next, we define a new orthonormal basis for $\mathbb{R}^2$.
\begin{definition}\label{def:localBasis}
    For an agent $i\in\mathcal{A}$ satisfying $\big|\mathcal{V}_i(t)\big|=1$, over some nonzero interval $t\in[t_1, t_2]$, where $a_{ij}(t)\neq0$, we define the \emph{contact basis} as
    \begin{align}
        \hat{p}_{ij}(t) = \frac{\mathbf{s}_{ij}(t)}{||\mathbf{s}_{ij}(t)||} = \frac{\mathbf{s}_{ij}(t)}{2R}, \label{eq:phat}\\
        \hat{q}_{ij}(t) = \frac{\dot{\mathbf{s}}_{ij}(t)}{||\dot{\mathbf{s}}_{ij}(t)||} = \frac{\dot{\mathbf{s}}_{ij}(t)}{a_{ij}(t)}, \label{eq:qhat}
    \end{align}
    where $\hat{p}_{ij}(t)\cdot\hat{q}_{ij}(t) = 0$ by \eqref{eq:tangency}, and both vectors are unit length. Thus, \eqref{eq:phat} and \eqref{eq:qhat} constitute an orthonormal basis for $\mathbb{R}^2$.
\end{definition}

Next, we find the projection of $\ddot{\mathbf{s}}_{ij}(t)$ onto the new contact basis. From \eqref{eq:tangency} we have
\begin{equation} \label{eq:sddotPhat}
    \ddot{\mathbf{s}}_{ij}(t)\cdot\hat{p}_{ij}(t) = \ddot{\mathbf{s}}_{ij}(t)\cdot\frac{\mathbf{s}_{ij}(t)}{2R} = \frac{-a_{ij}^2(t)}{2R}.
\end{equation}
We apply integration by parts to find the $\hat{q}_{ij}(t)$ component of $\ddot{\mathbf{s}}_{ij}(t)$. First,
\begin{align}
    \int\ddot{\mathbf{s}}_{ij}(t)\cdot\dot{\mathbf{s}}_{ij}(t)~dt = \dot{\mathbf{s}}_{ij}(t)\cdot\dot{\mathbf{s}}_{ij}(t) - \int\ddot{\mathbf{s}}_{ij}(t)\cdot\dot{\mathbf{s}}_{ij}(t)~dt,
\end{align}
which implies
\begin{equation} \label{eq:parts-2}
    \int\ddot{\mathbf{s}}_{ij}(t)\cdot\dot{\mathbf{s}}_{ij}(t)~dt = \frac{1}{2} \dot{\mathbf{s}}_{ij}(t)\cdot\dot{\mathbf{s}}_{ij}(t) = \frac{1}{2}a_{ij}^2(t).
\end{equation}
Taking a time derivative of \eqref{eq:parts-2} yields
\begin{equation} \label{eq:sddotDotSdot}
    \ddot{\mathbf{s}}_{ij}(t)\cdot\dot{\mathbf{s}}_{ij}(t) = a_{ij}(t)\dot{a}_{ij}(t).
\end{equation}

Next we present Theorem \ref{thm:aNeq0}, which gives the optimal trajectory for agent $i$ whenever $a_{ij}(t)=0$ over any nonzero interval while the safety constraint is active.

\begin{theorem} \label{thm:aNeq0}
For any agents $i\in\mathcal{A}$ and $j\in\mathcal{V}_i(t)$, if $a_{ij}(t) = 0$ over some nonzero interval $t\in[t_1, t_2]$, then the optimal trajectory for agent $i$ is to follow $\mathbf{u}_i(t) = \mathbf{u}_j(t)$ for all $t\in[t_1, t_2)$.
\end{theorem}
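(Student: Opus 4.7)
The plan is to obtain $\mathbf{u}_i(t)=\mathbf{u}_j(t)$ essentially as a direct kinematic consequence of the hypothesis $a_{ij}(t)=0$ on the interval $[t_1,t_2]$, and then conclude optimality by a uniqueness argument in the same spirit as Lemma \ref{lma:manyConstraints}.

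First, I would unpack the hypothesis. By definition, $a_{ij}^2(t) = \dot{\mathbf{s}}_{ij}(t)\cdot\dot{\mathbf{s}}_{ij}(t)$, so $a_{ij}(t)=0$ holding throughout a nonzero interval $[t_1,t_2]$ forces $\dot{\mathbf{s}}_{ij}(t)=\mathbf{0}$ pointwise on that interval. Since the double-integrator dynamics \eqref{eqn:pDynamics}--\eqref{eqn:vDynamics} make $\dot{\mathbf{s}}_{ij}$ absolutely continuous in $t$, I may differentiate on the open interval $(t_1,t_2)$ to obtain $\ddot{\mathbf{s}}_{ij}(t)=\mathbf{0}$ for all $t\in[t_1,t_2)$. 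Using $\ddot{\mathbf{s}}_{ij}(t) = \mathbf{u}_j(t) - \mathbf{u}_i(t)$ from \eqref{eq:s} differentiated twice, this immediately gives $\mathbf{u}_i(t)=\mathbf{u}_j(t)$ on $[t_1,t_2)$.

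Second, I would argue optimality from uniqueness. Since $j\in\mathcal{V}_i(t)$, the interaction dynamics (Definition \ref{df:interactionDynamics}) place $j$ above $i$ in priority, so from the perspective of agent $i$ the control $\mathbf{u}_j(t)$ is an exogenous signal. Any admissible control that keeps $a_{ij}(t)$ identically zero over the interval must satisfy the same kinematic identity derived above, so there is exactly one feasible $\mathbf{u}_i$ consistent with the premise of the theorem. A unique feasible trajectory is trivially optimal with respect to the energy functional of Problem \ref{prb:trajectory}, which mirrors the concluding step in the proof of Lemma \ref{lma:manyConstraints}.

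I do not expect a serious technical obstacle here; the calculation is two lines. The main care points are stylistic and bookkeeping: (i) being explicit that $\ddot{\mathbf{s}}_{ij}$ exists and equals $\mathbf{u}_j-\mathbf{u}_i$ almost everywhere on $(t_1,t_2)$, which justifies restricting the conclusion to $t\in[t_1,t_2)$ as stated, and (ii) noting that the premise $j\in\mathcal{V}_i(t)$ places the burden of the maneuver on $i$ (so "follow $\mathbf{u}_j$" is not circular in the decentralized framework). Once these two points are pinned down, the theorem falls out immediately and no variational analysis on the Hamiltonian \eqref{eq:hamiltonian} is actually required for this special case.
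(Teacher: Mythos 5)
Your proposal is correct and follows essentially the same route as the paper: the paper's proof likewise observes that $a_{ij}(t)=0$ on the interval forces $\dot{\mathbf{s}}_{ij}(t)=\mathbf{0}$, hence $\mathbf{v}_i(t)=\mathbf{v}_j(t)$, and differentiation gives $\mathbf{u}_i(t)=\mathbf{u}_j(t)$ on $[t_1,t_2)$. Your added remark that this unique feasible control is therefore trivially optimal is left implicit in the paper but is the same idea, not a different argument.
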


\begin{proof}
By definition we have $a_{ij}(t) = |\dot{\mathbf{s}}_{ij}(t)| = 0$. This implies $\dot{s}_{ij}(t) = 0$, and therefore $\mathbf{v}_j(t) = \mathbf{v}_{i}(t)$ for all $t\in[t_1, t_2]$. Thus $\mathbf{u}_i(t) = \mathbf{u}_j(t)$ for all $t\in[t_1, t_2)$.
\end{proof}

Thus, for any agent $i\in\mathcal{A}$ which has an active safety constraint with some agent $j\in\mathcal{V}_i(t)$, Theorem \ref{thm:aNeq0} provides the optimal control input for agent $i$ in the case that $a_{ij}(t) = 0$ over a nonzero time interval.
If $a_{ij}(t) = 0$ for at single instant $t\in\mathbb{R}$, then the optimal solution at that instant must enforce continuity of $a_{ij}(t)$ and the constraint $\mathbf{s}_{ij}(t)\cdot\mathbf{s}_{ij}(t) = 4R^2$.

Finally, we may project the dynamics of agent $i$ onto the basis given in Definition \ref{def:localBasis} and solve for the optimal trajectory when $a_{ij}(t) \neq 0$.
Next, we will use \eqref{eq:sddotPhat} and \eqref{eq:sddotDotSdot} to project $\ddot{\mathbf{s}}_{ij}(t)$ onto the contact basis,
    \begin{equation} \label{eq:sddotContactBasis}
        \ddot{\mathbf{s}}_{ij}(t) = 
        \begin{bmatrix}
        -\frac{a_{ij}^2(t)}{2R}\\
        \dot{a}_{ij}(t)
        \end{bmatrix}
        \cdot
        \begin{bmatrix}
        \hat{p}_{ij}(t)\\
         \hat{q}_{ij}(t)
        \end{bmatrix}
        ,
    \end{equation}
    which we use to solve for the time derivatives of \eqref{eq:phat} and \eqref{eq:qhat}. First,
    \begin{align}
        \frac{d}{dt}\hat{p}_{ij}(t) = \frac{\dot{\mathbf{s}}_{ij}(t)}{2R} = \frac{a(t)}{2R}\hat{q}_{ij}(t). \label{eq:dPhatDt}
    \end{align}
    Then, by the quotient rule,
        \begin{align}
        \frac{d}{dt}\hat{q}_{ij}(t) &= \frac{\ddot{\mathbf{s}}_{ij}(t)~a_{ij}(t) - \dot{\mathbf{s}}_{ij}(t)~\dot{a}_{ij}(t)}{a_{ij}^2(t)} \notag \\
        &= \frac{a_{ij}(t)\big(-a^2(t)\frac{1}{2R}\hat{p}_{ij}(t) + \dot{a}_{ij}(t)\hat{q}_{ij}(t)\big)}{a_{ij}^2(t)} \notag\\
        &\hspace{1em}- \frac{\dot{\mathbf{s}}_{ij}(t)~\dot{a}_{ij}(t)}{a_{ij}^2(t)} \notag \\
        &= -\frac{a_{ij}(t)}{2R}\hat{p}_{ij}(t). \label{eq:dQhatDt}
    \end{align}

	From \eqref{eq:s}, we may now write $\ddot{\mathbf{s}}_{ij}(t)$ projected on to the contact basis (Definition \ref{def:localBasis}) as
	\begin{align}
	\ddot{\mathbf{s}}_{ij}(t) =&~ \mathbf{u}_j(t) + \boldsymbol{\lambda}_{i}^v(t) + \mu_i(t)\mathbf{s}_{ij}(t) \notag\\
	=&~\Big(\mathbf{u}_j(t) + \boldsymbol{\lambda}_i^v(t)\Big)
	\begin{bmatrix}
	\hat{p}_{ij}(t) \\ \hat{q}_{ij}(t)
	\end{bmatrix}
	+ \mu_i(t)
	\begin{bmatrix}
	2R \\ 0
	\end{bmatrix}. \label{eq:sddotExpanded}
	\end{align}
	Next, we set \eqref{eq:sddotContactBasis} equal to \eqref{eq:sddotExpanded} and rewrite it as a system of scalar equations,
	\begin{align}
	   \boldsymbol{\lambda}_i^v(t)\cdot\hat{p}_{ij}(t) &= -\frac{a_{ij}^2(t)}{2R} - 2R ~\mu_{ij}(t) - \mathbf{u}_j(t)\cdot\hat{p}_{ij}(t), \label{eq:lvp}\\
	    \boldsymbol{\lambda}_i^v(t)\cdot\hat{q}_{ij}(t) &= \dot{a}_{ij}(t) - \mathbf{u}_j(t)\cdot\hat{q}_{ij}(t). \label{eq:lvq}
	\end{align}
	Taking the time derivative of \eqref{eq:lvp} yieds
	\begin{align}
	\frac{a_{ij}(t)}{2R}\boldsymbol{\lambda}_i^v(t)\cdot\hat{q}_{ij}(t) + \dot{\boldsymbol{\lambda}}_i^v(t)\cdot\hat{p}_{ij}(t) = - \frac{a_{ij}(t)\dot{a}_{ij}(t)}{R} \notag\\ -2R\dot{\mu}_{ij}(t) - \dot{\mathbf{u}}_j(t)\cdot\hat{p}_{ij}(t) - \frac{a_{ij}(t)}{2R}\mathbf{u}_j(t)\cdot\hat{q}_{ij}(t).\label{eq:lvpDeriv}
	\end{align}
	We then substitute \eqref{eq:lvdot2} and \eqref{eq:lvq} into \eqref{eq:lvpDeriv}, which yields
	\begin{align}\label{eq:lpp}
	    \boldsymbol{\lambda}_i^p(t)\cdot\hat{p}_{ij}(t) &= \frac{3a_{ij}(t)\dot{a}_{ij}(t)}{2R} + 2R\dot{\mu}_{ij}(t) \notag\\&~+ \dot{\mathbf{u}}_{j}(t)\cdot\hat{p}_{ij}(t).
	\end{align}
	
	Taking a time derivative of \eqref{eq:lvq} yields,
	\begin{align}
	-\frac{a_{ij}(t)}{2R} \boldsymbol{\lambda}_i^v(t)\cdot\hat{p}_{ij}(t) + \dot{\boldsymbol{\lambda}}_i^v(t)\cdot\hat{q}_{ij}(t) = \ddot{a}_{ij}(t)\notag\\ ~- \dot{\mathbf{u}}_j(t)\cdot\hat{q}_{ij}(t) + \frac{a_{ij}(t)}{2R}\mathbf{u}_j(t)\cdot\hat{p}_{ij}(t), \label{eq:lvqDeriv}
    \end{align}
	and substituting \eqref{eq:lvdot2} and \eqref{eq:lvp} into \eqref{eq:lvqDeriv}, yields
	\begin{equation} \label{eq:lpq}
	    \boldsymbol{\lambda}_i^p\cdot\hat{q}_{ij}(t) = \dot{\mathbf{u}}_j(t)\cdot\hat{q}_{ij}(t) - \ddot{a}_{ij}(t) + \frac{a_{ij}^3(t)}{4R^2}.
	\end{equation}
	
	We then take an additional time derivative of \eqref{eq:lpp} and \eqref{eq:lpq}.  This yields
	\begin{align}
	    \dot{\boldsymbol{\lambda}}_i^p(t)\cdot\hat{p}_{ij}(t) &= - \frac{a_{ij}(t)}{2R}\boldsymbol{\lambda}_i^p(t)\cdot\hat{q}_{ij}(t) \notag\\ &~+  \frac{3}{2R}(\dot{a}_{ij}^2(t) + a_{ij}(t)\ddot{a}_{ij}(t)) + 2R\dot{\mu}_{ij}(t) \notag\\
	    &~+ \frac{a_{ij}(t)}{2R}\dot{\mathbf{u}}_{j}(t)\cdot\hat{q}_{ij}(t)
	    ~+ \ddot{\mathbf{u}}_j(t) \hat{p}(t), \label{eq:lppdot}
	    \\
	    \dot{\boldsymbol{\lambda}}_i^p(t)\cdot\hat{q}_{ij}(t) &= \frac{a_{ij}(t)}{2R}\boldsymbol{\lambda}_i^p(t)\cdot\hat{p}_{ij}(t) \notag\\&~
	     - \dddot{a}_{ij}(t) + \frac{3}{4R^2} a_{ij}^2(t)\dot{a}_{ij}(t)\notag\\
	     &~- \frac{a_{ij}(t)}{2R}\dot{\mathbf{u}}_j(t)\cdot\hat{p}_{ij}(t) + \ddot{\mathbf{u}}_j(t)\cdot\hat{q}(t).\label{eq:lpqdot}
	\end{align}
	Substituting \eqref{eq:lpdot2}, \eqref{eq:lpp}, and \eqref{eq:lpq} into \eqref{eq:lppdot} and \eqref{eq:lpqdot} yields a system of nonlinear ordinary differential equations,
	\begin{align}
    \frac{a_{ij}^2(t)}{2R} \mu_{ij}(t) +  
	\frac{a_{ij}^4(t)}{8R^3} &=
	2R\dot{\mu}_{ij}(t) +
	\frac{4}{2R}a_{ij}(t)\ddot{a}_{ij}(t) \notag\\ &~~+
	\frac{3}{2R}\dot{a}_{ij}(t) +
	\ddot{\mathbf{u}}_j(t)\cdot\hat{p}_{ij}(t),
	\label{eq:diffeq1}  \\
	a_{ij}(t)\dot{\mu}_{ij}(t) &+
	\dot{a}_{ij}(t)\mu_{ij}(t) + 
	\ddot{\mathbf{u}}_j(t)\cdot\hat{q}_{ij}(t) \notag\\ &~~+
	\frac{6}{4R^2}a_{ij}^2(t)\dot{a}_{ij}(t) =
	\dddot{a}_{ij}(t).
	\label{eq:diffeq2}
	\end{align}

    Thus, for any constrained trajectory to be energy-optimal, it must be a solution of \eqref{eq:diffeq1} and \eqref{eq:diffeq2}.
    In general, finding a solution is rare, since both equations are nonlinear and \eqref{eq:diffeq2} is third order.
    Therefore, our approach will be to impose $\dot{a}_{ij}(t) = 0$ over the constraint arc, which is a locally optimal trajectory per Theorem \ref{thm:aNeq0}.
    Thus, the remaining unknown quantities are: the junction time when agent $i$ transitions from the unconstrained to constrained arc, $t_1$, the junction time that agent $i$ exits from the constrained arc, $t_2$, and the initial 
    orientation of the 
    vector $\mathbf{s}_{ij}(t_1)$.
    These quantities are coupled to the proceeding and following unconstrained arcs by the jump conditions \cite{Bryson1975AppliedControl},
    \begin{align}
        \mathbf{x}_i(t_1^-) &= \mathbf{x}_i(t_1^+), \label{eq:jump1}\\
        \boldsymbol{\lambda}_i^T(t_1^-) &= \boldsymbol{\lambda}_i^T(t_1^+) + \boldsymbol{\nu}_i^T\frac{\partial N_i\big(t, \mathbf{x}_i(t)\big)}{\partial \mathbf{x}(t)}\Bigg|_{t=t_1}, \label{eq:jump2} \\
        H(t_1^-) &= H(t_1^+) + \boldsymbol{\nu}_i^T \frac{\partial N_i\big(t, \mathbf{x}_i(t)\big)}{\partial t}\Bigg|_{t=t_1}, \label{eq:jump3} \\
        H(t_2^-) &= H(t_2^+), \label{eq:jump4} \\
        \mathbf{x}_i(t_2^-) &= \mathbf{x}_i(t_2^+), \label{eq:jump5} \\
        \boldsymbol{\lambda}_i(t_2^-) &= \boldsymbol{\lambda}_i(t_2^+), \label{eq:jump6}
    \end{align}
    where the superscripts $t^-$ and $t^+$ correspond to the left and right-side limits of $t$, respectively. Thus, $t_1^-$ and $t_2^+$ correspond to the unconstrained arcs at the junctions $t_1$ and $t_2$. Likewise, $t_1^+$ and $t_2^-$ correspond to the junctions where agent $i$ enters and exits the constrained arc, respectively. The constant vector $\boldsymbol{\nu}_i$ is given by \cite{Bryson1975AppliedControl}
    \begin{equation}
        \boldsymbol{\nu}_i = 
        \begin{bmatrix}
        -\frac{\mathbf{u}_i(t_1^-)\cdot \mathbf{u}_i(t_1^-)}{2(\mathbf{s}_{ij}(t_1^-)\cdot \mathbf{v}_i(t_1^-))} \\
        0
        \end{bmatrix}.
    \end{equation}
    
    For the case when agent $i$'s trajectory has only a single constrained arc, \eqref{eq:jump1} - \eqref{eq:jump6} coupled with the initial and final conditions, \eqref{eqn:BCs} and \eqref{eqn:ICs}, constitute 26 scalar equations to solve for the 26 unknowns (8+8+8 constants of integration + 2 transition times). When additional constrained arcs become active, additional jump conditions must be computed using \eqref{eq:jump1} - \eqref{eq:jump6}. The entire system of equations is then solved simultaneously to yield the energy-optimal trajectory for agent $i$.

	\subsection{The Full Solution to Problem \ref{prb:trajectory}}
	
	So far, we have provided the unconstrained and safety-constrained arcs with a relaxation of the state and control constraints. An extension to the fully-constrained case for agent $i\in\mathcal{A}$ is straightforward, and the solution for every possible case is outlined below.
	\begin{enumerate}
	    \item No constraints are active: Agent $i$ will follow an unconstrained trajectory.
	    \item Only one safety constraint is active: Agent $i$ will follow the trajectory outlined in Section \ref{sec:constrainedSolution}.
	    \item More than one safety constraint is active: By Lemma \ref{lma:manyConstraints}, the unique trajectory of agent $i$ will be defined by the active constraints.
	    \item Only one state/control constraint is active: This reduces to a steering problem, where agent $i$ follows a known velocity profile and must arrive at a target state along a minimum-energy path \cite{Ross2015}.
	    \item One safety and one state/control constraint are active: Agent $i$ must follow the path imposed by the safety constraint with a speed profile determined by the state/control constraint. The unique solution to this problem is, therefore, optimal.
	\end{enumerate}
	The state trajectory of agent $i\in\mathcal{A}$ must be a piecewise-continuous function consisting of the five possible cases. These segments are then pieced together using the optimality conditions. We presented the optimality conditions for collision avoidance in Section \ref{sec:constrainedSolution}; the conditions for the state and control constraints are derived in \cite{Malikopoulos2018} and \cite{Bryson1975AppliedControl}.

\section{Simulation Results}\label{sec:simulation}
To provide insight into the behavior of the agents, a series of simulations were performed with $M=N=10$ agents and a time parameter of $T=10$ s. The simulations were run for $t=20$ s or until all agents reach their assigned goal, whichever occurred later. The center of the formation moved with a velocity of $\mathbf{v}_{cg}=[0.15, ~ 0.35]$ m/s; the leftmost and rightmost three goals each move with an additional periodic velocity of $[0.125\cos{0.75t}, ~ 0]$ m/s relative to the formation. Videos of the simulation results can be found at  \url{https://sites.google.com/view/ud-ids-lab/omas}.
 
 \begin{figure}[ht]
    \centering
    \includegraphics[width=0.85\linewidth]{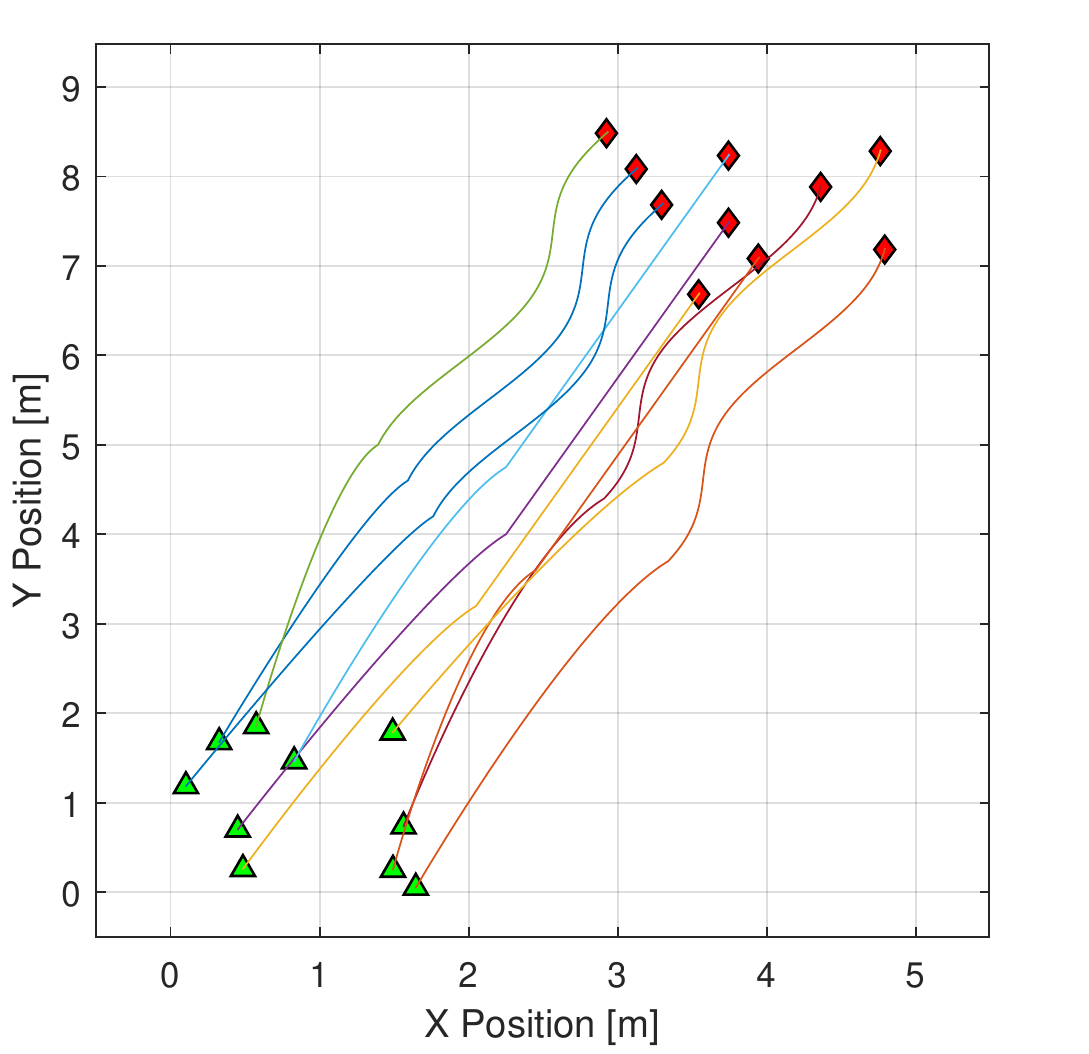}
    \caption{Simulation result for the centralized case. Goals which minimize the unconstrained trajectories are assigned to the agents once at $t_i^0$.}
    \label{fig:h=inf}
\end{figure}
 
The minimum separating distance between agents, total energy consumed, and maximum velocity for the unconstrained solutions to Problem \ref{prb:trajectory} are all given as a function of the horizon in Table \ref{tab:resultsEnergy}. The energy consumption only considers the energy required to reach the goal, which, in this case, was significantly lower than the energy required to maintain the formation. The trajectory of each agent over time is given in Figures \ref{fig:h=inf}--\ref{fig:h=0.75} for varying sensing horizon values. Although the trajectories may appear to cross in Figures \ref{fig:h=inf}--\ref{fig:h=0.75}, they are only crossing in space and not in time.
 
\begin{table}[ht]
    \centering
    \caption{Numerical results for N=10 agents and M=10 goals for various sensing distances.}
    \label{tab:resultsEnergy}
    \begin{tabular}{ccccc}
    $h$ [m] & min. separation & energy & $t_f$ & Total Bans \\
                & [cm]      &    [J/kg] & [s]  & \\ \toprule
    $\infty$ &	25.25&	0.85&	20& 	0 \\
    1.60&	1.64&	1.10&	20& 	4 \\
    1.50&	1.60&	1.17&	20&  	24\\
    1.40&	2.01&	1.96&	23.3&	31\\
    1.30&	0.33&	1670&	26.05&	36\\
    1.20&   0.65&    866&   25.35&  34\\
    1.10&	1.05&	5370&	26.85&	40\\
    1.00&   1.96&   7609&   30.65&  35\\
    0.95&	3.12&	3149&	25.05&	27\\
    0.75&	1.37&	6.87&	20&	    35\\
    0.50&   0.27&	692.0&	26.65&	35
    \end{tabular}
\end{table}

The performance of our algorithm is strongly affected by how much information is available to each agent. This is a function of the sensing horizon, initial states of the agents, and the desired formation shape. Generally, better overall performance requires the agents to have more information. However, it is not apparent what information is necessary; in fact, the results in Table \ref{tab:resultsEnergy} generally show no correlation between energy consumption and sensing horizon.

The trade-off for more information is in the computational and sensing load imposed on each agent. As an agent observes more of the system (via sensing, communication, or memory), the computational burden to solve the assignment and trajectory generation problems also increases. However, this computational cost does not necessarily result in improved system performance, as demonstrated in Table \ref{tab:resultsEnergy}.

\begin{figure}[ht]
    \centering
    \includegraphics[width=0.75\linewidth]{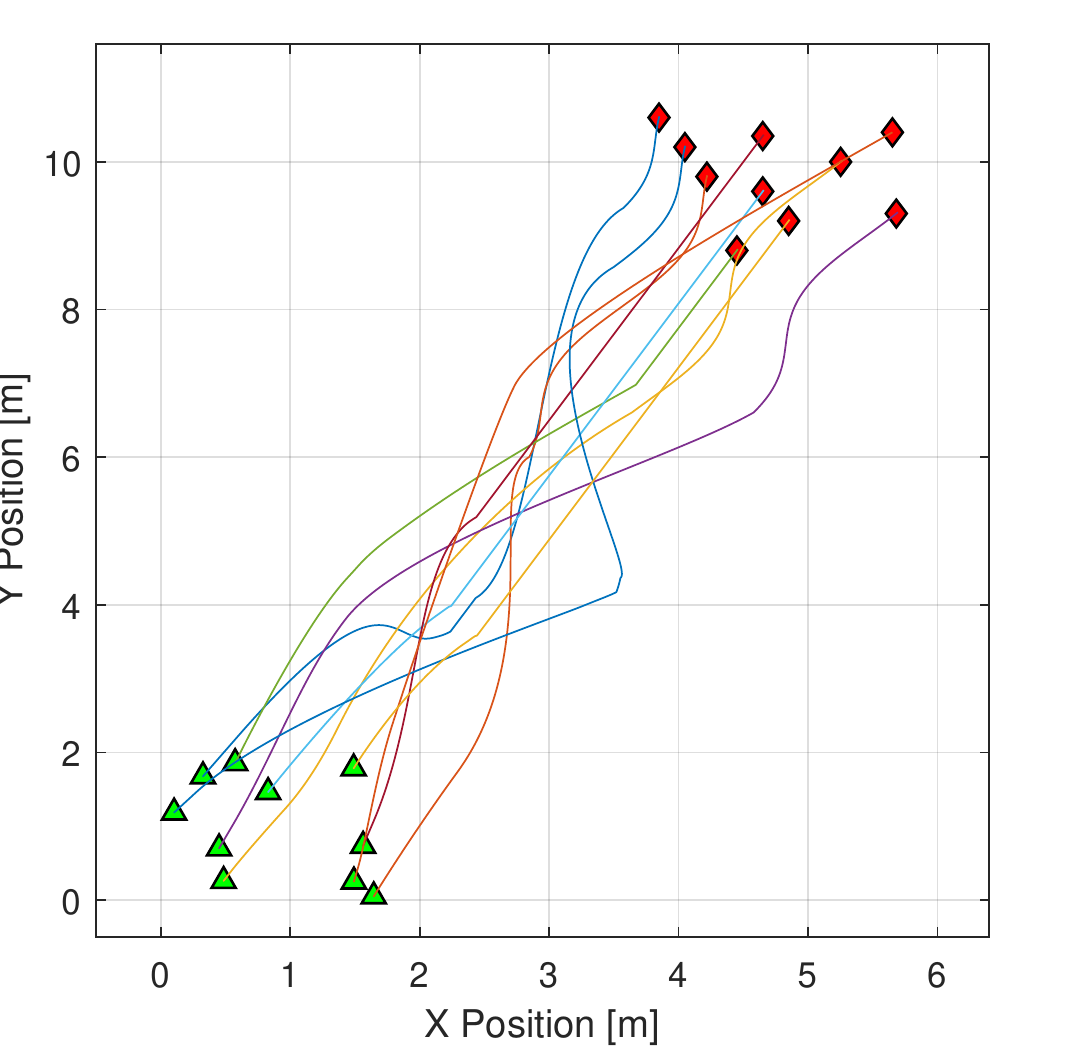}
    \caption{Simulation result for $h=1.30$ m. The agents do not start with a globally unique assignment, and several agents must re-route partway through the simulation. Although the trajectories cross in space they do not cross in time.}
    \label{fig:h=130}
\end{figure}

\begin{figure}[ht]
    \centering
    \includegraphics[width=0.75\linewidth]{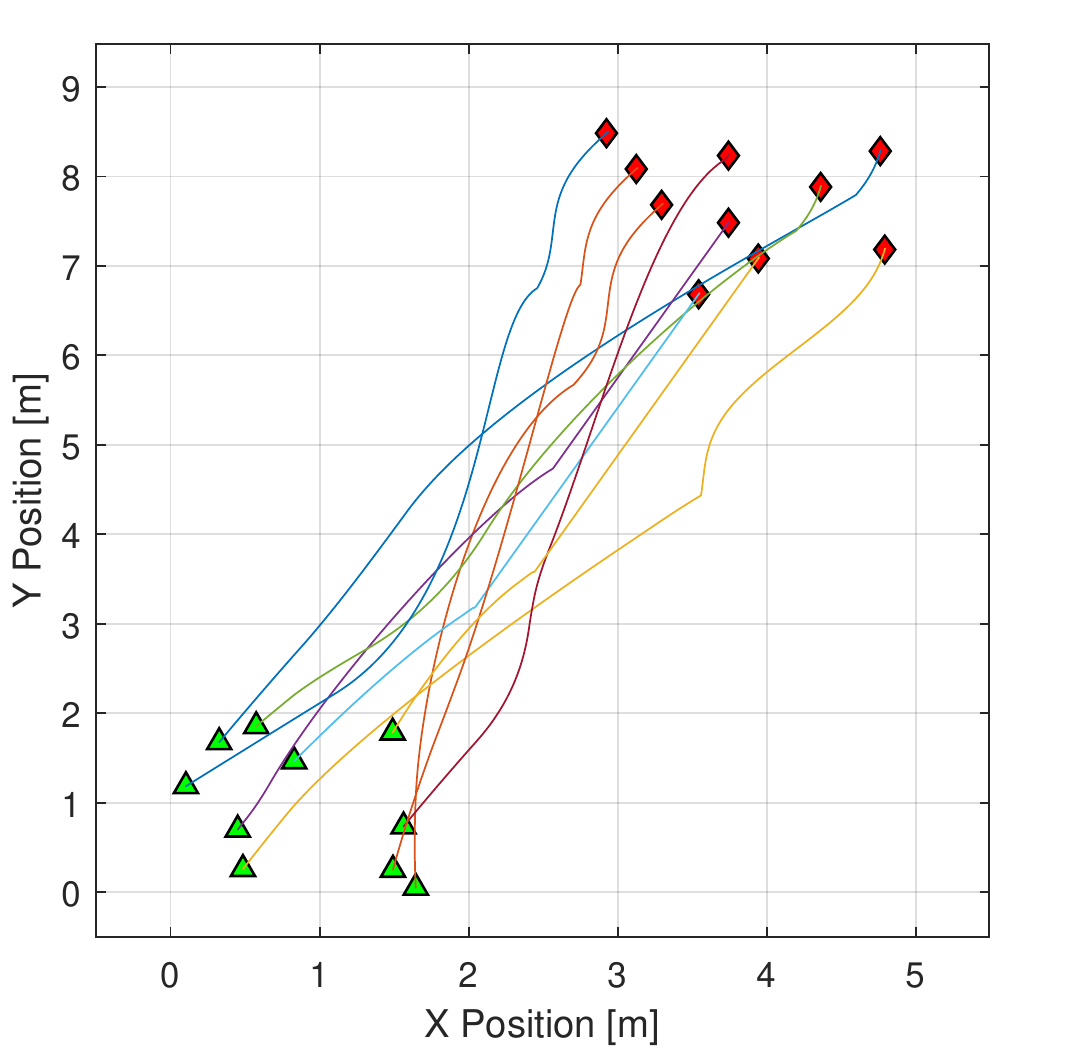}
    \caption{Simulation result for $h=0.75$ m. Although the horizon for this case is smaller than in Figure \ref{fig:h=130}, the system dynamics happen to result in more efficient trajectories overall.}
    \label{fig:h=0.75}
\end{figure}

\section{Conclusion}\label{sec:conclusion}

In this paper, we proposed a decentralized framework for moving a group of autonomous agents into a desired formation. The only information required a priori is the positions of the goals in a global coordinate frame. We provided guarantees of convergence under Assumption \ref{smp:bans}. Our method leveraged a set of agent interaction dynamics, which allowed a decentralized calculation of the priority order for agents.
We also derived local energy-optimal trajectories for constrained and unconstrained paths and presented the conditions for optimality in the form of a boundary-value nonlinear ordinary differential equation. The resulting optimal controller was validated in MATLAB using ten agents and ten dynamic moving goals with varying values of the sensing radius.

One area for future research is finding a relaxation of Assumption \ref{smp:bans} or another fundamental condition to replace it. Deriving additional locally-optimal solutions to \eqref{eq:diffeq1} and \eqref{eq:diffeq2} is another research direction. A relaxation of the jump conditions to find approximately optimal trajectories that can be generated in real-time is another area which is under active research \cite{Beaver2020AnFlocking}.
Finally, analysis of the system parameter $T$ using the fully constrained agent trajectories is another potential direction for future research. This includes methods to optimally select $T$ or to estimate its magnitude based on the state and observations of each agent.

\bibliography{MendeleyBlue,Andreas}

\end{document}